\newtheorem{thm}{Theorem}
\newtheorem{cor}[thm]{Corollary}
\newtheorem{lem}[thm]{Lemma}
\newtheorem{que}[thm]{Question}
\newtheorem{prop}[thm]{Proposition}
\newtheorem{exa}[thm]{Example}
\theoremstyle{definition}
\newtheorem{defn}[thm]{Definition}
\newtheorem{rem}[thm]{Remark}
\begin{document}
 \title{Strongly Clean Matrix Rings over Commutative Rings}
 \author[Fan and Yang]{Lingling Fan $^{\dagger}$ and Xiande Yang$^{\ddagger}$}
\maketitle \centerline{\tiny {$\dagger$Department of Mathematics and
Statistics, Memorial University of Newfoundland, St.John's, Canada
A1C 5S7}} \centerline{\tiny lingling@math.mun.ca}
 \centerline{\tiny {$\ddagger$Department of Mathematics, Harbin Institute of Technology, Harbin, P.R.China, 150001}}
\centerline{\tiny{ xiande.yang@gmail.com }}

 \footnote{Corresponding author: Xiande Yang, Department
of Mathematics, Harbin Institute of Technology, Harbin, China,
150001, Tel:+86-451-83108428, Email address: xiande.yang@gmail.com,
May 3, 2008}

 \begin{abstract} A ring $R$ is called strongly clean if every element of $R$ is the sum of a unit and an idempotent that commute.
 By {\rm SRC} factorization, Borooah, Diesl, and Dorsey
 \cite{BDD051} completely determined when ${\mathbb M}_n(R)$ over a commutative local ring $R$ is strongly clean.
 We generalize the notion of {\rm SRC} factorization to commutative rings, prove that commutative $n$-{\rm SRC}
 rings $(n\ge 2)$ are precisely the commutative local rings over which
${\mathbb M}_n(R)$ is strongly clean, and characterize strong
cleanness of matrices over commutative projective-free rings having
{\rm ULP}. The strongly $\pi$-regular property (hence, strongly
clean property) of ${\mathbb M}_n(C(X,{\mathbb C}))$ with $X$ a {\rm
   P}-space relative to ${\mathbb C}$ is also obtained where $C(X,{\mathbb C})$ is the ring of complex valued continuous
   functions.

 \bigskip

 \noindent Key Words: {\it strongly clean ring,  matrix ring, commutative ring, strongly $\pi$-regular ring, ring of complex valued continuous functions.}\\
 \noindent 2000 Mathematics Subject Classification: {\it Primary
 16U99, 13A99; Secondary 26A99.}
 \end{abstract}
 \bigskip
\section{Introduction}
Let $R$  be an associative ring with identity and $U(R)$ denote the
set of units of $R$. An element $a\in R$ is called \emph{strongly
clean} if $a=e+u$ for some $e^2=e$ and $u\in U(R)$ such that $eu=ue$
and the ring $R$ is a \emph{strongly clean ring} if every element of
$R$ is strongly clean \cite{N99}.

Clearly, local rings are strongly clean. An element $a\in R$ is
\emph{strongly $\pi$-regular} if both chains $Ra \supset Ra^2
\supset \cdots$ and $aR \supset a^2R \supset \cdots$ terminate. $R$
is \emph{strongly $\pi$-regular} if every element of $R$ is strongly
$\pi$-regular \cite{Az54}. Strongly $\pi$-regular elements are
strongly clean \cite{N99}. Hence, strongly $\pi$-regular rings are
strongly clean \cite{BM88, N99}. The authors of \cite{A02} and
\cite{Mc03} proved independently that for a topological space $X$,
$C(X)$ is a strongly clean ring iff $X$ is strongly
zero-dimensional. We proved that $C(X,{\mathbb C}) $) is strongly
clean iff $X$ is strongly zero-dimensional \cite{FY071}. So $C(X)$
and $C(X,{\mathbb C}) $) with $X$ strongly zero-dimensional are
strongly clean. In his foundational paper \cite{N99}, Nicholson
asked if the matrix ring over a strongly clean ring is strongly
clean. Wang and Chen \cite{WC04} answered this question negatively.
Then a natural question arose: When is the matrix ring over a
strongly clean ring strongly clean? For local rings,
 Chen, Yang, and Zhou \cite{CYZ052} characterized when the $2\times 2$
 matrix ring ${\mathbb M}_{2}(R)$ over a commutative local ring $R$
 is strongly clean; Li \cite{Li07} characterized when a single $2\times 2$
 matrix over a commutative local ring is strongly clean; Borooah, Diesl, and Dorsey \cite{BDD051}
 characterized when the matrix ring ${\mathbb M}_{n}(R)$ over a commutative local
 ring $R$ is strongly clean; and recently, Yang and Zhou \cite{YZ072}
characterized when the $2\times 2$
 matrix ring ${\mathbb M}_{2}(R)$ over a local ring $R$ is
 strongly clean. For strongly $\pi$-regular rings, Yang and Zhou
 \cite{YZ071} proved that the matrix rings over some strongly $\pi$-regular rings are
 strongly clean. A completely
regular space $X$ is called a {\rm P}-\emph{space (relative to
${\mathbb R}$)} if every prime ideal in $C(X)$ is maximal
\cite[p.63]{GJ76}. In \cite{FY071}, we found that the matrix ring
over $C(X)$ with $X$ a {\rm P}-space relative to ${\mathbb R}$ is
strongly $\pi$-regular (hence, strongly clean).

 In this paper, we continue the study of when a matrix ring is strongly clean. The authors of \cite{BDD051} defined {\rm
SRC} factorization for a commutative local ring. They proved that
for a commutative local ring $R$, ${\mathbb M}_{n}(R)$ is strongly
clean iff $R$ is an $n$-{\rm SRC} ring and they showed that a matrix
ring over a Henselian ring is strongly clean. The theory of {\rm
SRC} factorization is a useful tool for judging strong cleanness of
matrix rings over commutative local rings. However, the theory is
constraint to commutative local rings. In Section $2$, we generalize
this definition to commutative rings (Definition \ref{defn:1.4}),
get a sufficient but not necessary condition for a matrix ring over
a commutative ring to be strongly clean (Theorem \ref{thm:1.6} and
Example \ref{exa:1.8}), and characterize an $n$-{\rm SRC} ring
(Theorem \ref{thm:13}). After reading an earlier version of this
paper(arXiv:0803.2176v1), Alex Diesl and Tom Dorsey improved upon
our results, and we thank them for giving their permission to
include their results here.  Specifically, Propositions 8, 10, and
13 are due to Diesl and Dorsey, as are Remarks 7(2) and 9, and
Example 15 (generalizing our observation for $n=2$).   Also, their
Lemma 3 refines our Corollary 4, and the proof we give of Corollary
4 is due to them. In Section $3$, we study the strong cleanness of
matrices over the class of commutative projective-free rings having
{\rm ULP} (see Definitions \ref{defn:11} and \ref{defn:19}). The
class of commutative projective-free rings having {\rm ULP} includes
commutative local rings, {\rm PID} (principal ideal domains),
polynomial rings with finitely many indeterminates over a {\rm PID}
(Quillen-Suslin Theorem), and etc.. We characterize when a single
matrix over this class of rings is strongly clean (Theorems
\ref{thm:2.7} and \ref{thm:2.10}). These results can help us to find
all strongly clean matrices over $R$ even if ${\mathbb M}_{n}(R)$ is
not strongly clean. In Section $4$, we find new classes of strongly
clean matrix rings--- matrix rings over $C(X,{\mathbb C})$ are
strongly $\pi$-regular (hence, strongly clean) when $X$ is a {\rm
P}-space relative to ${\mathbb C}$ (see the definition in Section
$4$).

Throughout the paper, when  $R[t]$ is a {\rm UFD} (unique
factorization domain), we let $\gcd(h(t), g(t))$ be the {\it
greatest common divisor} of the polynomials $h(t), g(t) \in
 R[t]$. If $R$ is a field, we require $\gcd(h(t), g(t))$ to be the
 monic greatest common divisor of the polynomials $h(t), g(t) \in
 R[t]$. The symbol {\rm Max}$(R)$ denotes the maximal spectrum of a
 commutative ring  $R$, $J(R)$ denotes the Jacobson radical, and ${\mathbb N}$ denotes the set of positive integers.
\section{Strong cleanness of  ${\mathbb M}_{n}(R)$ over a commutative ring $R$}

The authors of \cite{BDD051} defined {\rm SR} factorization and {\rm
SRC} factorization: Let $R$ be a commutative local ring. A
factorization $h(t)=h_{0}(t)h_{1}(t)$ in $R[t]$ of a monic
polynomial $h(t)$ is said to be an ${\rm SR}$ {\it factorization} if
$h_{0}(t)$ and $h_{1}(t)$ are monic and $h_{0}(0)$ and  $h_{1}(1)
\in U(R)$. The ring $R$ is an $n$-${\rm SR}$ {\it ring} if every
monic polynomial of degree $n$ in $R[t]$ has an ${\rm SR}$
factorization. A factorization $h(t)=h_{0}(t)h_{1}(t)$ in $R[t]$ of
a monic polynomial $h(t)$ is said to be an ${\rm SRC}$ {\it
factorization} if it is an {\rm SR} factorization and $\gcd\left(
\overline{h}_{0}(t), \overline{h}_{1}(t)\right)=1$ in the {\rm PID}
$\bar{R}[t]$ $(=\frac{R}{J(R)}[t])$. The ring $R$ is an $n$-${\rm
SRC}$ {\it ring} if every monic polynomial of degree $n$ in $R[t]$
has an ${\rm SRC}$ factorization. $R$ is an {\rm SRC} {\it ring} if
it is an $n$-{\rm SRC} ring for every $n\in {\mathbb N}$. They
proved that the matrix ring ${\mathbb M}_{n}(R)$ is strongly clean
iff $R$ is an $n$-{\rm SRC} ring.

Recall that, for a commutative ring $R$, a pair of polynomials
$(f_{0}(t), f_{1}(t))$ in $R[t]$ is {\it unimodular} if
$f_{0}(t)R[t]+f_{1}(t)R[t]=R[t]$ or equivalently,
 $f_{0}(t)h_{0}(t)+f_{1}(t)h_{1}(t)=1$ with some $h_{0}(t)$ and $ h_{1}(t)$ in
 $R[t]$. For a commutative
 local ring $R$ and monic polynomials $f_{0}(t)$ and $f_{1}(t)$ in $R[t]$, $\gcd\left(\overline{f}_{0}(t), \overline{f}_{1}(t)\right)=1$ iff
$\left(\overline{f}_{0}(t), \overline{f}_{1}(t)\right)$ is
unimodular in $\bar{R}[t]$ iff $\left(f_{0}(t), f_{1}(t)\right)$ is
unimodular in $R[t]$. $0$ and $1$ are the only idempotents of local
rings. So we generalize above definition to commutative rings.
\begin{defn}\label{defn:1.4} Let $R$ be a commutative ring and let $f(t)\in R[t]$ be a
monic polynomial. A factorization $f(t)=f_{0}(t)f_{1}(t)$ in $R[t]$
is called an {\rm SR} {\it factorization} if $f_{i}(t)$ is monic in
$R[t]$ and $f_{i}(e_i)\in U(R)$ with idempotents $e_0\not=e_1\in R$
$(i=0,1)$. The factorization $f(t)=f_{0}(t)f_{1}(t)$ is called an
{\rm SRC} {\it factorization} if, in addition, $(f_{0}(t),
f_{1}(t))$ is unimodular in $R[t]$. The ring $R$ is called an
$n$-{\rm SR} (resp., $n$-{\rm SRC}) {\it ring} if every monic
polynomial of degree $n$ has an {\rm SR} (resp., {\rm SRC})
factorization.
\end{defn}
\begin{thm}\label{prop:1.9}
 Let $R$ be a commutative ring. Then $R$ is strongly clean iff $R$ is a $1$-{\rm SR} ring iff $R$ is a  $1$-{\rm SRC} ring.
\end{thm}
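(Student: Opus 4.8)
The plan is to unwind both definitions in the special case $n=1$ and observe that everything collapses onto the single strongly clean decomposition. First I would record the shape of the objects involved: a monic polynomial of degree $1$ in $R[t]$ is $f(t)=t-a$ for a unique $a\in R$, and any factorization $f(t)=f_0(t)f_1(t)$ into monic polynomials must have $\{\deg f_0,\deg f_1\}=\{0,1\}$, since degrees add when monic polynomials are multiplied. A monic polynomial of degree $0$ is just $1$, so up to reordering the only monic factorization is $f(t)=1\cdot(t-a)$.

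Next I would translate the {\rm SR} condition into a statement about $a$. Writing $f_0(t)=1$ and $f_1(t)=t-a$, the requirement $f_0(e_0)\in U(R)$ is automatic, as it equals $1$, while $f_1(e_1)\in U(R)$ reads $e_1-a\in U(R)$ for some idempotent $e_1$. The side condition $e_0\neq e_1$ can always be arranged: since $R\neq 0$, the elements $0$ and $1$ are distinct idempotents, so at least one of them differs from $e_1$ and may be taken as $e_0$. Hence $f(t)=t-a$ admits an {\rm SR} factorization iff there is an idempotent $e$ with $e-a\in U(R)$. Because units are closed under negation, $e-a\in U(R)$ iff $a-e\in U(R)$, i.e. iff $a=e+(a-e)$ exhibits $a$ as the sum of an idempotent and a unit, which commute automatically since $R$ is commutative. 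Therefore $R$ is a $1$-{\rm SR} ring iff every $a\in R$ is strongly clean, giving the first equivalence.

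For the second equivalence I would check the extra unimodularity clause in the {\rm SRC} definition against the factorization $f(t)=1\cdot(t-a)$. The pair $(f_0(t),f_1(t))=(1,t-a)$ is automatically unimodular in $R[t]$, since $1\cdot 1+(t-a)\cdot 0=1$ witnesses $f_0(t)R[t]+f_1(t)R[t]=R[t]$. Thus for degree $1$ every {\rm SR} factorization is already an {\rm SRC} factorization, so the $1$-{\rm SR} and $1$-{\rm SRC} conditions coincide, and chaining the two steps yields the stated equivalences. I do not expect a genuine obstacle here; the only point demanding care is the side condition $e_0\neq e_1$ together with the implicit hypothesis $R\neq 0$, which I would flag explicitly so that the trivial factor $1$ can legitimately be evaluated at an idempotent distinct from the one controlling $a$.
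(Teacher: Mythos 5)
Your proposal is correct and follows essentially the same route as the paper: both reduce a degree-one monic polynomial to the trivial factorization $1\cdot(t-a)$, identify the {\rm SR} condition with the existence of an idempotent $e$ such that $a-e\in U(R)$ (which is exactly strong cleanness in a commutative ring), and note that unimodularity is automatic when one factor is $1$. Your extra care with the side condition $e_0\neq e_1$ and the hypothesis $R\neq 0$ is a welcome refinement of details the paper passes over, but it does not change the argument.
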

\begin{proof}  Suppose that $R$ is strongly clean. Let $f(t)=t+a\in R[t]$.
Write $-a=e+u$ where $e^2=e\in R$, $u\in U(R)$, and $eu=ue$. So
$f(e)=-u\in U(R)$. Hence, $f(t)=f_{0}(t)f_{1}(t)$ with
$f_{0}(t)=t+a$ and $f_{1}(t)=1$ is an {\rm SR} factorization.
Obviously, this is also an  {\rm SRC} factorization.

 Suppose that $R$ is a  $1$-{\rm SR} ring. Let $a\in
R$. Then $f(t)=t-a$ has an {\rm SR} factorization  in $R[t]$. It
must be that $f(t)=f_{0}(t)$ or $f(t)=f_{1}(t)$. So there exists
$e^2=e\in R$ such that $f(e)=e-a\in U(R)$.  Thus, $a$ is strongly
clean.\end{proof}
\begin{lem}\label{lem:0.5} Let $R$ be a commutative ring and let
$A \in {\mathbb M}_{n}(R)$. Let $f \in R[t]$ be a monic polynomial
for which $f(A)=0$ (e.g. the characteristic polynomial $\chi_{A}$ of
$A$, by the Cayley-Hamilton Theoerem \cite{Mc84}). If $f(e)$ is a
unit for some idempotent $e \in R$, then $A$ is strongly clean.
\end{lem}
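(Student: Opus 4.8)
The plan is to produce the strongly clean decomposition explicitly, using the given idempotent $e$ to manufacture a \emph{central} idempotent matrix. Set $E = eI \in {\mathbb M}_n(R)$. Since $e^2 = e$ we have $E^2 = E$, and because $eI$ is central it automatically commutes with $A$. Thus it suffices to show that $U := A - E = A - eI$ is a unit of ${\mathbb M}_n(R)$, for then $A = E + U$ is a strongly clean expression. Over the commutative ring $R$, the matrix $U$ is a unit precisely when $\det(A - eI) \in U(R)$, so the entire problem collapses to proving that this single scalar is a unit of $R$.

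To prove $\det(A - eI)$ is a unit, I would argue locally: an element of $R$ is a unit iff it lies in no maximal ideal, so I fix $\mathfrak{m} \in {\rm Max}(R)$ and pass to the residue field $K = R/\mathfrak{m}$, writing $\overline{(\cdot)}$ for reduction of matrix entries and polynomial coefficients modulo $\mathfrak{m}$. The goal becomes $\det(\overline{A} - \overline{e}\,I) \neq 0$ in $K$. Both hypotheses enter here: first, $\overline{e}$ is an idempotent of the field $K$, hence $\overline{e} \in \{0,1\}$; second, $f(e)$ being a unit of $R$ forces $\overline{f}(\overline{e}) = \overline{f(e)} \neq 0$ in $K$.

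The heart of the argument is an eigenvector computation. Suppose toward a contradiction that $\det(\overline{A} - \overline{e}\,I) = 0$. Then $\overline{e}$ is an eigenvalue of $\overline{A}$ over $K$, so there is a nonzero $v \in K^n$ with $\overline{A}\,v = \overline{e}\,v$. Since $\overline{A}$ acts on $v$ as the scalar $\overline{e}$, applying the polynomial $\overline{f}$ yields $\overline{f}(\overline{e})\,v = \overline{f}(\overline{A})\,v = \overline{f(A)}\,v = 0$, using $f(A)=0$; as $v \neq 0$ this forces $\overline{f}(\overline{e}) = 0$, contradicting $\overline{f}(\overline{e}) \neq 0$. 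Hence $\det(\overline{A} - \overline{e}\,I) \neq 0$ for every $\mathfrak{m} \in {\rm Max}(R)$, so $\det(A - eI)$ is a unit of $R$, and the decomposition $A = eI + (A - eI)$ exhibits $A$ as strongly clean.

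The step I expect to be the crux — rather than a purely routine obstacle — is the passage from ``$f(e)$ is a unit'' to ``$\det(A - eI)$ is a unit,'' because over a general commutative ring one cannot speak directly of eigenvalues or of a minimal polynomial. Localizing at each maximal ideal is exactly what makes the eigenvector argument available. In the special case $f = \chi_A$ one could shortcut this via the identity $\det(A - eI) = (-1)^n \chi_A(e)$, but the residue-field argument has the advantage of treating an arbitrary monic $f$ with $f(A)=0$ uniformly.
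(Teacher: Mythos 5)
Your proof is correct, but it reaches the key fact --- that $A-eI$ is invertible --- by a genuinely different route from the paper. The paper's argument is a one-line global computation: divide to get $f(t)=(t-e)g(t)+f(e)$, evaluate at $A$ to obtain $0=(A-eI)g(A)+f(e)I$, and read off the explicit inverse $-g(A)f(e)^{-1}$ of $A-eI$ (the two factors commute because both are polynomials in $A$ and $e$ is central). Your argument instead invokes the local--global principle for units of a commutative ring, reduces modulo each maximal ideal, and runs an eigenvector computation over the residue field: if $\overline{e}$ were an eigenvalue of $\overline{A}$, the relation $f(A)=0$ would force $\overline{f}(\overline{e})=0$, contradicting the invertibility of $f(e)$. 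Both are valid; each step of yours checks out (note that your observation $\overline{e}\in\{0,1\}$ is never actually used --- the eigenvector argument works for any scalar). What the paper's method buys is an explicit inverse and complete independence from maximal ideals and residue fields, making it shorter and entirely constructive; what yours buys is a conceptual explanation of \emph{why} $A-eI$ is invertible ($e$ cannot be an eigenvalue of $A$ at any residue field because $f$ annihilates $A$ but not $e$), and it correctly identifies, as the paper also remarks after its proof, that the case $f=\chi_A$ admits the shortcut $f(e)=\det(eI-A)$.
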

\begin{proof} Let $e$ be such an idempotent. We claim that
$A-eI$ is a unit. Using long division, write $f(t)=(t-e)g(t)+f(e)$.
Then, $0=f(A)=(A-eI)g(A)+f(e)I$. Then, $(A-eI)g(A)f(e)^{-1}=I$, and
we conclude (since the two operators involved commute) that $A-eI$
is invertible. Since $eI$ is a central idempotent of ${\mathbb
M}_{n}(R)$, we conclude that $A=(A-eI)+eI$ is strongly clean.
\end{proof}

Note that the first few lines of work are not needed when
$f=\chi_{A}$, since then $f(e)=\det(eI-A)$, which shows immediately
that $eI-A$ is invertible.

\begin{cor}\label{cor:0.6} Let $R$ be a commutative ring and let
$A \in {\mathbb M}_{n}(R)$. If $f=\chi_{A}$ has an $n$-${\rm SRC}$
factorization, then $A$ is strongly clean.
\end{cor}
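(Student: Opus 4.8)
The plan is to convert the SRC factorization of $\chi_A$ into an idempotent matrix that is itself a polynomial in $A$, split $R^n$ along this idempotent, and then dispatch each piece by the very argument used in \lemref{lem:0.5}. Write $\chi_A = f_0 f_1$ for the given SRC factorization, so that each $f_i$ is monic, $f_i(e_i)\in U(R)$ for idempotents $e_0,e_1\in R$, and $(f_0,f_1)$ is unimodular in $R[t]$. Unimodularity furnishes $g_0,g_1\in R[t]$ with $f_0 g_0 + f_1 g_1 = 1$. I would set $\epsilon(t)=f_0(t)g_0(t)$ and define $e:=\epsilon(A)\in {\mathbb M}_n(R)$.

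The first key step is to verify that $e$ is an idempotent commuting with $A$. Commutation is automatic, since $e$ is a polynomial in $A$. For idempotency, observe that $\epsilon^2-\epsilon = f_0 g_0(f_0 g_0 - 1) = -f_0 f_1 g_0 g_1 = -\chi_A g_0 g_1$ is divisible by $\chi_A$; since $\chi_A(A)=0$ by Cayley--Hamilton, evaluating at $A$ gives $e^2-e=0$. The same bookkeeping, together with $1-\epsilon = f_1 g_1$, shows $f_0(A)(I-e)=(\chi_A g_1)(A)=0$ and $f_1(A)\,e=(\chi_A g_0)(A)=0$. Thus $e$ and $I-e$ are orthogonal idempotents commuting with $A$, giving $R^n=(I-e)R^n\oplus eR^n$, and $f_0$ (resp.\ $f_1$) annihilates the action of $A$ on $(I-e)R^n$ (resp.\ $eR^n$).

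Next I would assemble the strongly clean decomposition $A=E+(A-E)$ with $E:=e_0(I-e)+e_1 e$. Here $E$ is idempotent because $e_0,e_1$ are idempotents of $R$ while $e,I-e$ are orthogonal, and $E$ commutes with $A$ because $e_0,e_1$ are central and $e$ commutes with $A$. To see that $A-E$ is a unit, I would run the inversion from \lemref{lem:0.5} separately on each summand: on $(I-e)R^n$ the map induced by $A-E$ is the restriction of $A-e_0 I$, which is invertible there because $f_0$ annihilates $A$ on that summand and $f_0(e_0)\in U(R)$ (write $f_0(t)=(t-e_0)p(t)+f_0(e_0)$ and invert); symmetrically $A-e_1 I$ is invertible on $eR^n$. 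Since $A-E$ respects the decomposition $R^n=(I-e)R^n\oplus eR^n$, gluing the two restricted inverses produces a two-sided inverse of $A-E$, so $A=E+(A-E)$ is strongly clean.

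The main obstacle is everything surrounding the splitting idempotent $e$: one must marry the unimodular relation to Cayley--Hamilton precisely so as to force both $e^2=e$ and the vanishing of $f_i(A)$ on the correct summand, and then justify gluing the block inverses even though $(I-e)R^n$ and $eR^n$ are only projective, not necessarily free. The latter causes no difficulty because the inversion in \lemref{lem:0.5} is expressed purely through polynomials in the relevant endomorphism, so it applies verbatim to the restriction of $A$ to each projective summand. I also note that this recovers \lemref{lem:0.5} as the degenerate case $f_1=1$, where one may take $e=0$ and $E=e_0 I$.
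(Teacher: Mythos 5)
Your proof is correct and follows essentially the same route as the paper: both split $R^{n}$ using the unimodular relation $f_{0}g_{0}+f_{1}g_{1}=1$ into the two summands $\ker(f_{0}(A))=(I-e)R^{n}$ and $\ker(f_{1}(A))=eR^{n}$ and then run the division trick of Lemma~\ref{lem:0.5} on each piece. The only difference is that you inline what the paper cites --- your explicit idempotent $e=(f_{0}g_{0})(A)$ is exactly the projection furnished by \cite[Lemma 11]{BDD051}, and your direct verification that $A-E$ is a unit replaces the appeal to the corner-ring result of \cite{N99} --- which makes the argument self-contained but not substantively different.
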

\begin{proof} By hypothesis, there exist monic polynomials $f_{0}, f_{1} \in
R[t]$ such that $f=f_{0}f_{1}$ and $(f_{0}, f_{1})$ is unimodular,
and idempotents $e_{0}, e_{1}$ for which $f_{0}(e_{0}),
f_{1}(e_{1})$ are units. Find $g_{0}, g_{1}$ such that $f_{0}g_{0}+
f_{1}g_{1}=1$. By \cite [Lemma 11]{BDD051}, $ker(f_{0}(A))\bigoplus
ker(f_{1}(A))=R^{n}$. It is clear that both $ker(f_{0}(A))$ and
$ker(f_{1}(A))$ are $A$-invariant. Now, $A|_{ker(f_{0}(A))}$
satisfies the polynomial $f_{0}$ and $A|_{ker(f_{1}(A))}$ satisfies
the polynomial $f_{1}$. By Lemma \ref{lem:0.5}, $A|_{ker(f_{0}(A))}$
and $A|_{ker(f_{1}(A))}$ are strongly clean. It follows from
\cite{N99} that $A$ is strongly clean. Indeed, let $\varphi \in
End_{R}(R^{n})$ be the projection of $R^{n}$ onto $ker(f_{0}(A))$,
relative to the direct sum $R^{n}=ker(f_{0}(A))\bigoplus
ker(f_{1}(A))$. Then, $A\varphi=\varphi A$ and $\varphi A$ and
$(1-\varphi)A$ are strongly clean in $\varphi{\mathbb
M}_{n}(R)\varphi$ and $(1-\varphi){\mathbb M}_{n}(R)(1-\varphi)$,
respectively.
\end{proof}
\begin{thm}\label{thm:1.6} If $R$ is an $n$-{\rm SRC} ring, then ${\mathbb M}_{n}(R)$ is strongly clean.
\end{thm}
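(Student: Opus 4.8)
The plan is to show that every matrix $A \in \mathbb{M}_n(R)$ is strongly clean, and the natural route is to reduce the statement about $A$ to a statement about its characteristic polynomial $\chi_A$, then invoke Corollary \ref{cor:0.6}. Since $R$ is an $n$-SRC ring, every monic polynomial of degree $n$ in $R[t]$ has an SRC factorization; in particular $\chi_A$, being monic of degree $n$ by the definition of the characteristic polynomial, has an SRC factorization. So the proof should be very short: given $A$, form $f = \chi_A$, observe $f$ is monic of degree $n$, apply the $n$-SRC hypothesis to obtain an SRC factorization of $f$, and then Corollary \ref{cor:0.6} yields immediately that $A$ is strongly clean.

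**First I would** make explicit that $\chi_A(t) = \det(tI - A)$ is monic of degree exactly $n$ for any $A \in \mathbb{M}_n(R)$; this is the only fact that ties the hypothesis (about degree-$n$ monic polynomials) to the matrix $A$. **Then I would** write, by the $n$-SRC assumption, $\chi_A = f_0 f_1$ as an SRC factorization in $R[t]$. **Finally**, since this is an $n$-SRC factorization of $\chi_A$ in the sense required by Corollary \ref{cor:0.6}, that corollary directly gives that $A$ is strongly clean. As $A$ was arbitrary, $\mathbb{M}_n(R)$ is strongly clean.

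**The only substantive point**, and the step I expect to require the most care, is confirming that the heavy lifting has genuinely been deferred to Corollary \ref{cor:0.6}: that corollary is where the Cayley--Hamilton theorem (so that $\chi_A(A)=0$), the CRT-style kernel decomposition $R^n = \ker(f_0(A)) \oplus \ker(f_1(A))$ from unimodularity via \cite[Lemma 11]{BDD051}, and Lemma \ref{lem:0.5} applied to each $A$-invariant summand all come into play to produce a commuting idempotent-plus-unit decomposition of $A$. **In the write-up** I would therefore keep the proof of Theorem \ref{thm:1.6} to essentially one or two lines, citing Corollary \ref{cor:0.6}, rather than re-deriving the decomposition. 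There is no real obstacle here beyond ensuring the degree count on $\chi_A$ matches the hypothesis and that the corollary's hypotheses (existence of an SRC factorization of the characteristic polynomial) are exactly what the $n$-SRC property supplies.
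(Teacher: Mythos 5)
Your proposal is correct and follows exactly the paper's own argument: for arbitrary $A$, the characteristic polynomial $\chi_A$ is monic of degree $n$, so the $n$-SRC hypothesis supplies an SRC factorization, and Corollary \ref{cor:0.6} then gives that $A$ is strongly clean. The paper's proof is the same two-line reduction, with all the substantive work deferred to Corollary \ref{cor:0.6} just as you describe.
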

\begin{proof} For any matrix $A\in {\mathbb M}_{n}(R)$, the characteristic
polynomial, $\chi_{A}(t)$, of $A$ has an $n$-{\rm SRC}
factorization. So $A$ is strongly clean by Corollary \ref{cor:0.6}.
That is, ${\mathbb M}_{n}(R)$ is strongly clean.
\end{proof}
\begin{rem}[On Theorem \ref{thm:1.6}] Being an $n$-{\rm SRC} ring is not necessary for the matrix
ring ${\mathbb M}_{n}(R)$ to be strongly clean (see Example
\ref{exa:1.8}).\end{rem}
\begin{rem}[On Definition \ref{defn:1.4}] $1).$ In Corollary \ref{cor:0.6}, there is no restriction that $e_0\not=e_1$,
but in Definition \ref{defn:1.4}, we require $e_0\not=e_1$. Allowing
the idempotents to agree does not really gain anything, since given
an $n$-{\rm SRC} factorization $f=f_0f_1$ with $e_0=e_1$ and
$f(e_0)\in U(R)$, $f=f\cdot 1$ is an $n$-{\rm SRC} factorization
with respect to $e_0$ and any other idempotent.

$2).$ Logically, allowing idempotents other than $0$ and $1$ to
appear in Definition \ref{defn:1.4} is not as much of a
generalization as we might think. But it can simplify computation.
Recall \cite[Proposition 2]{N99}: If $\{ e_{1}, e_{2}, \cdots,
e_{n}\}$ is a set of complete orthogonal central idempotents, then
$R=\bigoplus_{i=1}^{n} e_{i}R= \bigoplus_{i=1}^{n} e_{i}Re_{i}$, and
$R$ is strongly clean iff $e_{i}Re_{i}$ is strongly clean for $i=1,
\cdots, n$. Observe that, for any idempotent $e\in R$ (with $R$
commutative) and $g(t)\in R[t]$, $g(e)=eg(1)+(1-e)g(0)$, and
moreover, that $eg(1)=eg(e)$. In particular, $g(e)$ is a unit in $R$
iff $eg(1)=eg(e)$ is a unit in the corner ring $eR$ and
$(1-e)g(0)=((1-e)g)(0)$ is a unit in the corner ring $(1-e)R$. Thus,
allowing two idempotents $e_1$ and $e_2$ for the polynomials
$f_0(t)$ and $f_1(t)$ in an ${\rm SR}$ factorization $f=f_0f_1$,
look at the associated four term direct sum decomposition
corresponding to $e_0e_1+e_0(1-e_1)+(1-e_0)e_1+(1-e_0)(1-e_1)=1$. We
get a sum of $f$:
$f=f_0f_1=e_0e_1f_0f_1+e_0(1-e_1)f_0f_1+(1-e_0)e_1f_0f_1+(1-e_0)(1-e_1)f_0f_1$.
$e_0e_1f(t)$ and $e_0e_1g(t)$ are units at the identity of
$e_0e_1R$. $(1-e_0)(1-e_1)f_0(t)$ and $(1-e_0)(1-e_1)f_1(t)$ are
units at $0$ of $(1-e_0)(1-e_1)R$. In the other two factors, one of
$f_0$ and $f_1$ (multiplied with corresponding identity of the
corner rings) is a unit at the corresponding identity and the other
is a unit at $0$. So each component of $f_0$ and $f_1$ has an ${\rm
SR}$ factorization corresponding to the trivial idempotents $0$ and
$``1"$ of the corresponding corner rings.

$3).$ We still call the factorization an ${\rm SR}$ (${\rm SRC}$)
factorization as in \cite{BDD051} because Definition \ref{defn:1.4}
is essentially the same as that in \cite{BDD051} when we deal with
the strong cleanness of matrix rings ${\mathbb M}_n(R)$ with $n\ge
4$ (see Proposition \ref{thm:0.1} and Proposition \ref{thm:0.3}
below) although Definition \ref{defn:1.4} is really a generalization
as Example \ref{rem:0.7} shows.
\end{rem}
\begin{prop}\label{thm:0.1} Let $R$ be a $n$-${\rm
SR}$ ring for some $n\geq 4$. Then $R$ is local.
\end{prop}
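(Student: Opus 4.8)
The plan is to argue by contraposition: I will assume $R$ is not local and produce a monic polynomial of degree $n$ that admits no {\rm SR} factorization, contradicting the hypothesis that $R$ is $n$-{\rm SR}. Since $R$ is commutative, non-locality means $R$ has two distinct maximal ideals $M_1\neq M_2$, which are automatically comaximal ($M_1+M_2=R$). Writing $k_j=R/M_j$ for the residue fields, the idea is to build a single monic test polynomial $f\in R[t]$ whose reductions $\overline{f}^{(j)}\in k_j[t]$ are arranged so that, in the two residue fields, any monic factorization of $f$ is forced to assign the factor $f_0$ \emph{incompatible degrees}.

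Concretely, I would fix exponents $a_j,b_j\geq 1$ with $a_j+b_j=n$ and use the Chinese Remainder Theorem (applied coefficientwise, with leading coefficient lifted to $1$) to produce a monic $f$ of degree exactly $n$ with $\overline{f}^{(j)}=t^{a_j}(t-1)^{b_j}$ in $k_j[t]$ for $j=1,2$. Now suppose, for contradiction, that $f=f_0f_1$ is an {\rm SR} factorization with monic $f_i$, idempotents $e_0\neq e_1$, and $f_i(e_i)\in U(R)$; set $D=\deg f_0$ (monic, so the degree is preserved under reduction). Reducing modulo $M_j$, each $\overline{e_i}$ is an idempotent of the field $k_j$, hence lies in $\{0,1\}$, and $\overline{f_i}(\overline{e_i})=\overline{f_i(e_i)}\neq 0$, so $\overline{e_i}$ is not a root of $\overline{f_i}$. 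Since the only monic divisors of $\overline{f}^{(j)}=t^{a_j}(t-1)^{b_j}$ in the UFD $k_j[t]$ are the $t^{a'}(t-1)^{b'}$ with $0\le a'\le a_j,\ 0\le b'\le b_j$, we have $\overline{f_0}=t^{a'}(t-1)^{b'}$; the non-vanishing of $\overline{f_0}$ at $\overline{e_0}$ and of $\overline{f_1}$ at $\overline{e_1}$, together with $a_j,b_j\ge 1$, forces $\overline{e_0}\neq\overline{e_1}$ in $k_j$ and pins $\overline{f_0}$ to be either $t^{a_j}$ or $(t-1)^{b_j}$. Hence $D=\deg\overline{f_0}\in\{a_j,b_j\}$ for each $j$.

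The contradiction then comes from choosing the exponents so that $\{a_1,b_1\}\cap\{a_2,b_2\}=\emptyset$: the single integer $D$ cannot lie in both disjoint sets. This is exactly where $n\geq 4$ enters: taking $(a_1,b_1)=(1,n-1)$ and $(a_2,b_2)=(2,n-2)$ gives disjoint sets precisely when $n\geq 4$ (for $n=3$ they coincide), while $a_j,b_j\ge 1$ is needed to rule out the degenerate idempotent images. I expect the degree-forcing analysis in each residue field to be the step requiring the most care: a priori the freedom to use nontrivial global idempotents $e_0,e_1$, whose images in $k_1$ and $k_2$ need not agree, looks as though it might dodge the constraint, and the crux is to verify that, whatever these images are, the two unit conditions still confine $D$ to $\{a_j,b_j\}$. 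I note finally that the argument uses only the {\rm SR} hypothesis (no unimodularity), and that the case where $R$ carries a nontrivial idempotent is absorbed automatically, since such an $R$ still possesses two comaximal maximal ideals.
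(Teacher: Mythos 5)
Your proof is correct. The degree-counting engine is the same as the paper's: reduce to residue fields, note that a monic factorization of $t^{a}(t-1)^{b}$ (with $a,b\ge 1$) compatible with the unit-at-an-idempotent conditions must split as $t^{a}$ times $(t-1)^{b}$, and then choose the two exponent patterns $(1,n-1)$ and $(2,n-2)$ so that the degree sets $\{1,n-1\}$ and $\{2,n-2\}$ are disjoint exactly when $n\ge 4$. Where you genuinely diverge is in the organization: the paper runs a two-stage argument, first using a nontrivial idempotent to write $R=R_1\times R_2$ and testing the product polynomial $\left(t^{n-1}(t-1),\,t^{n-2}(t-1)^{2}\right)$ to kill nontrivial idempotents, and only then deducing locality by passing to quotients and invoking the Chinese Remainder Theorem on two maximal ideals; you collapse both stages into a single contrapositive by applying CRT directly to two distinct maximal ideals to manufacture one monic test polynomial with the prescribed reductions. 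Your version is slightly more economical (one argument instead of two, and the idempotent case is absorbed since a nontrivial idempotent already forces two maximal ideals), at the cost of needing the coefficientwise CRT lifting step, which you carry out correctly; the paper's version isolates the statement ``$n$-SR with $n\ge4$ implies no nontrivial idempotents,'' which it reuses conceptually in Proposition~\ref{prop:0.2}. Your case analysis in the residue fields (showing $\overline{e}_0\neq\overline{e}_1$ is forced and the degree of $f_0$ lands in $\{a_j,b_j\}$) is the step you flagged as delicate, and it checks out, including the exclusion of the trivial factorization $f\cdot 1$.
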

\begin{proof} Suppose $e\in R$ is a nontrivial idempotent. Thus, $R$
is a nontrivial direct product, say $R=R_{1}\times R_{2}$, of rings.
Consider the monic polynomial $h(t)=(t^{n-1}(t-1),
t^{n-2}(t-1)^{2})\in R[t]$=$R_1[t]$$\times $$R_2[t]$. Suppose that
$h=fg$ is an $n$-${\rm SR}$ factorization. Write $f=(f_{1}, f_{2})$
and $g=(g_{1}, g_{2})$. Clearly, $f_{1}g_{1}$ is an $n$-${\rm SR}$
factorization of $t^{n-1}(t-1)$ in $R_{1}[t]$ and $f_{2}g_{2}$ is an
$n$-${\rm SR}$ factorization of $t^{n-2}(t-1)^{2}$ in $R_{2}[t]$.

Now, more generally, suppose that $fg$ is an $n$-${\rm SR}$
factorization of $t^{k}(t-1)^{n-k}$, over an arbitrary nonzero
commutative ring $R$. The same is then true passing to a quotient
$F=R/\mathfrak{m}$, where $\mathfrak{m}$ is a maximal ideal. But $F$
is a field, so $F[t]$ is a UFD, and it follows that the image of the
monic polynomial $f$ (resp. $g$) must be $t^{i}(t-1)^{j}$ for some
$i$ and $j$. If $ij\not=0, t^{i}(t-1)^{j}$ annihilates every
idempotent, so in order for $fg$ to be an $n$-${\rm SR}$
factorization, $f$ and $g$ must be, in some order, $t^{i}$ and
$(t-1)^{j}$.

Returning to our previous situation, $f_{1}$ must have degree either
$n-1$ or $1$, whereas $f_{2}$ must have degree either $n-2$ or $2$.
Since $1, 2, n-1, n-2$ are all distinct (since $n\geq 4$), we
conclude that $f$ cannot be monic, and hence $h$ has no $n$-${\rm
SR}$ factorization. We conclude that every idempotent of $R$ is
trivial.

It remains to show that $R$ is local. Observe that the property
$n$-${\rm SR}$ passes to quotient rings. In particular, if $R$ has
$n$-${\rm SR}$, where $n\geq 4$, every quotient of $R$ also has no
nontrivial idempotents. Thus, suppose that $R$ has two distinct
maximal ideals $\mathfrak{m}_{1}$ and $\mathfrak{m}_{2}$. By the
Chinese Remainder Theorem, $R/(\mathfrak{m}_{1}\bigcap
\mathfrak{m}_{2})\cong R/\mathfrak{m}_{1}\times R/\mathfrak{m}_{2}$,
which clearly has nontrivial idempotents. We conclude that
$\mathfrak{m}_{1}=\mathfrak{m}_{2}$. It follows that $R$ has a
unique maximal ideal, so $R$ is local, as desired.
\end{proof}
\begin{rem}\label{rem:0.7} The hypothesis that $n\ge 4$ in Proposition \ref{thm:0.1} is, in fact, necessary. One can show that
$\mathbb{C}\times \mathbb{C}$ is an $n$-{\rm SR} ring for $n=2, 3$.
Other examples include
 $R\times R$ where $R$ is quadratically or cubically closed fields or complete local rings with closed fields as
 quotients.\end{rem}
\begin{prop}\label{prop:0.2} Let $n=2$ or $3$ and let $R$ be a $n$-${\rm SR}$ ring. If $R[t]$ has an
irreducible monic polynomial of degree $n$, then $R$ has only the
trivial idempotents.
\end{prop}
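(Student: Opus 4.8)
The plan is to argue by contradiction, adapting the strategy of \propref{thm:0.1}: if $R$ carried a nontrivial idempotent we could split $R = R_1 \times R_2$ with both factors nonzero, and then exhibit a monic polynomial of degree $n$ admitting no {\rm SR} factorization, contradicting the $n$-{\rm SR} hypothesis. Throughout I would identify $R[t]$ with $R_1[t] \times R_2[t]$ and write polynomials as pairs $h = (h_1, h_2)$; such an $h$ is monic of degree $n$ exactly when $h_1$ and $h_2$ are, and a factorization $h = fg$ into \emph{monic} polynomials $f = (f_1, f_2)$, $g = (g_1, g_2)$ forces $\deg f_1 = \deg f_2$ and $\deg g_1 = \deg g_2$.

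The first key step is to convert irreducibility of the single polynomial $p = (p_1, p_2)$ into irreducibility of one whole component. If both $p_1$ and $p_2$ split off a monic linear factor over $R_1$ and $R_2$ respectively, then $p$ would have a monic factor of degree $1$, contradicting its irreducibility. Hence at least one component, say $p_2$, has no monic linear factor over $R_2$. Because $n \le 3$, a monic quadratic or cubic with no monic linear factor is irreducible (its only monic factorizations are $1 \cdot p_2$ and $p_2 \cdot 1$); equivalently, in every monic factorization $p_2 = f_2 g_2$ one has $\deg f_2 \in \{0, n\}$.

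Next I would write down the offending polynomial. Choose any monic $h_1 \in R_1[t]$ of degree $n$ divisible by $t(t-1)$ (for instance $h_1 = t(t-1)$ when $n = 2$ and $h_1 = t^2(t-1)$ when $n = 3$), and set $h = (h_1, p_2)$, a monic polynomial of degree $n$ over $R$. Suppose $h = fg$ were an {\rm SR} factorization, so $f = (f_1, f_2)$ and $g = (g_1, g_2)$ are monic and $f(e_0), g(e_1) \in U(R)$ for idempotents $e_0 \ne e_1$. Since $f_2$ is a monic factor of $p_2$, the common degree $\deg f_1 = \deg f_2$ lies in $\{0, n\}$; thus one of $f, g$ equals $1$ and the other equals $h$, and the {\rm SR} condition forces $h(e) \in U(R)$ for some idempotent $e$ of $R$. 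Writing $e = (a, b)$ with $a \in R_1$ and $b \in R_2$ idempotent, the first coordinate of $h(e)$ is $h_1(a)$, which vanishes because $t(t-1) \mid h_1$ and $a(a-1) = a^2 - a = 0$ for any idempotent $a$. Hence $h(e)$ has a zero, non-unit first coordinate, a contradiction. Therefore $R$ has only the trivial idempotents.

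The crux --- and the only place the bound $n \le 3$ enters --- is the middle step, where irreducibility of $p$ is upgraded to irreducibility of a single component $p_i$: for $n \le 3$ the sole intermediate factor-degree to control is governed by the presence of a linear factor, so ``$p$ cannot be split at any common degree'' collapses to ``one of $p_1, p_2$ is irreducible.'' For $n \ge 4$ this collapse genuinely fails (one factor could peel off a linear piece while the other peels off a quadratic piece at an incompatible degree), which is exactly why $\propref{thm:0.1}$ treats $n \ge 4$ instead by comparing the four distinct degrees $1, 2, n-1, n-2$. Finally, I would note that if ``irreducible'' is read in the stronger ring-theoretic sense (admitting no factorization into two non-units), the conclusion is immediate without the $n$-{\rm SR} hypothesis, since $p = (p_1, 1)(1, p_2)$ already displays $p$ as a product of non-units over the nontrivial product $R_1 \times R_2$; the construction above is the form that survives under the weaker, monic-factorization notion of irreducibility relevant to {\rm SR} factorizations.
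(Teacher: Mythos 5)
Your proof is correct and follows the same overall strategy as the paper's: assume a nontrivial idempotent, split $R$ as a product, upgrade irreducibility of $p$ to irreducibility of one coordinate (via the absence of a monic linear factor, which is where $n\le 3$ enters), and then pair that coordinate with a degree-$n$ multiple of $t(t-1)$ to manufacture a monic polynomial with no {\rm SR} factorization. The only divergence is the endgame. The paper places $t^{n-1}(t-1)$ in the other coordinate and, invoking the mod-maximal-ideal degree analysis from \propref{thm:0.1}, forces that coordinate's factor degrees into $\{1,n-1\}$, disjoint from the $\{0,n\}$ imposed by the irreducible coordinate. You instead let the irreducible coordinate alone collapse every monic factorization to the trivial one $1\cdot h$, and then kill that directly because $h_1(a)=a(a-1)q(a)=0$ for every idempotent $a$, so $h$ cannot be a unit at any idempotent. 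Your closing step is slightly more elementary (no reduction modulo maximal ideals is needed) and isolates the real role of the second coordinate: it only needs to be annihilated by all idempotents. Your final remark on the two readings of ``irreducible'' is also apt --- under the strict non-unit-factorization reading the hypothesis already forbids nontrivial idempotents via $p=(p_1,1)(1,p_2)$, so the monic-factorization reading is the one under which the proposition has content, and it is the one both you and the paper implicitly use.
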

\begin{proof} Let $f\in R[t]$ be irreducible, monic, and
degree $n$. Suppose that $e\in R$ is a nontrivial idempotent: regard
$R$ as the direct product of $eR$ and $(1-e)R$. It follows that
either $ef(t)$ or $(1-e)f(t)$ is an irreducible polynomial, since
otherwise, both factorizations must be into monic polynomials of
degree $1$ and $n-1$, respectively, and we can piece these together
to factor $f$ as a product of a monic degree $1$ and degree $n-1$
polynomial. Without loss of generality, suppose $g(t)=ef(t)$ is
irreducible in $eR[t]$. Consider the monic polynomial
$f^{\prime}=(g(t), t^{n-1}(t-1))\in R[t]$. Any $n$-${\rm SR}$
factorization of $f^{\prime}$ must have first coordinate either
degree $0$ or $n$, since $g$ is irreducible. On the other hand, the
second coordinate, as in the proof of Proposition \ref{thm:0.1},
must have degree $1$ or $n-1$, and it follows as in that proof,
since $0, 1, n-1, n$ are all distinct, that $f^{\prime}$ has no
$n$-${\rm SR}$ factorization. We conclude from this contradiction
that $R$ has no nontrivial idempotents.
\end{proof}
\begin{defn}\label{defn:11} \cite[p.17]{C85} A ring $R$
is called {\it projective-free} if every finitely generated
projective $R$-module is free of unique rank.\end{defn} Camillo and
Yu \cite{CY94} proved that $R$ is semiperfect iff $R$ is {\rm
I}-finite and clean (A ring $R$ is called  {\rm I}-{\it finite } if
$R$ does not have an infinite set of non-zero orthogonal
idempotents). For a projective-free ring, we have the following
result.
\begin{prop}\label{prop:12}
 Let $R$ be a projective-free ring. Then the following
are equivalent:
\begin{enumerate}
\item $R$ is a strongly clean ring.
\item $R$ is a clean ring.
\item $R$ is a local ring.
\item $R$ is an exchange ring.
\item $R$ is a semiperfect ring.

\hspace{-0.5 in} If, in addition, $R$ is commutative, then the above
are equivalent to the following:

\item  $R$ is a $1$-{\rm SR} ring.
\item $R$ is a $1$-{\rm SRC} ring.\end{enumerate}
\end{prop}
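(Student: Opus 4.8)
My plan rests on a single structural fact about projective-free rings, which I would establish first: such a ring has no idempotents other than $0$ and $1$. Indeed, if $e=e^2\in R$, then the decomposition $R=eR\oplus(1-e)R$ of the right module $R_R$ displays $eR$ and $(1-e)R$ as finitely generated projective modules. Since $R$ is projective-free, each is free of a well-defined rank, say $m$ and $n$; comparing with $R_R\cong R$, uniqueness of rank forces $m+n=1$, so one of the two summands is the zero module and hence $e=0$ or $e=1$. (This also shows $R\neq 0$, since in the zero ring rank would not be unique.) Everything else then reduces to the interplay between idempotent-triviality and the five ring-theoretic conditions.

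With idempotent-triviality in hand, the plan is to run the cycle $(3)\Rightarrow(1)\Rightarrow(2)\Rightarrow(4)\Rightarrow(3)$ and to attach $(5)$ to it via $(3)\Rightarrow(5)\Rightarrow(4)$. Of these, the implications $(3)\Rightarrow(1)$ (local rings are strongly clean, as noted in the introduction), $(1)\Rightarrow(2)$ (a strongly clean ring is clean by definition), $(2)\Rightarrow(4)$ (clean rings are exchange rings), $(3)\Rightarrow(5)$ (local rings are semiperfect), and $(5)\Rightarrow(4)$ (semiperfect rings are exchange rings) are all standard and invoke nothing about projective-freeness; they are routine citations of the general theory.

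The one implication where the hypothesis does real work is $(4)\Rightarrow(3)$, and this is the step I would treat as the crux. First I would note that in a nonzero ring with only trivial idempotents every one-sided inverse is two-sided: from $ab=1$ we get $(ba)^2=ba$, so $ba\in\{0,1\}$, and $ba=0$ would give $b=(ba)b=0$, contradicting $ab=1$; hence $ba=1$ and $a\in U(R)$. Then I would apply Nicholson's description of an exchange ring: for each $a\in R$ there is an idempotent $e\in aR$ with $1-e\in(1-a)R$. Since $e\in\{0,1\}$, either $e=1$, giving $1\in aR$ and hence $a\in U(R)$, or $e=0$, giving $1\in(1-a)R$ and hence $1-a\in U(R)$. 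Thus for every $a$ one of $a,1-a$ is a unit, which is exactly the criterion for $R$ to be local. This closes the cycle, so $(1)$ through $(5)$ are equivalent.

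For the commutative addendum, no new argument is needed: Theorem~\ref{prop:1.9} already states that a commutative ring is strongly clean iff it is a $1$-{\rm SR} ring iff it is a $1$-{\rm SRC} ring, which is precisely $(1)\Leftrightarrow(6)\Leftrightarrow(7)$. Overall I expect no genuine difficulty here; the substance is concentrated in the two-line passage to trivial idempotents and the exchange-to-local step, while the remaining links are standard equivalences among the clean, exchange, semiperfect, and local conditions. The main obstacle, such as it is, is bookkeeping: assembling the correct standard implications so that they form a single connected cycle through all five statements.
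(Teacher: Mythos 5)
Your proposal is correct and follows essentially the same route as the paper: trivial idempotents from projective-freeness, the exchange property forcing one of $a$, $1-a$ to be a unit (hence locality), and Theorem~\ref{prop:1.9} for the commutative addendum. The only cosmetic differences are that you attach (5) via ``semiperfect $\Rightarrow$ exchange'' where the paper cites Camillo--Yu for ``semiperfect $\Rightarrow$ clean,'' and that your explicit observation that one-sided inverses are two-sided in a nonzero ring with only trivial idempotents makes the passage from $1\in aR$ to $a\in U(R)$ slightly more careful than the paper's.
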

\begin{proof} $``({\it 3}) \Rightarrow ({\it 1})\Rightarrow ({\it 2})"$.
This is clear.

 $``({\it 2 })\Rightarrow
({\it 4})"$. This is a well-known result in \cite{N77}.

 $``({\it 4}) \Rightarrow
({\it 3})"$. We prove $R$  has only $0$ and $1$ as its idempotents.
Suppose $e^2=e\in R$. Then $R=Re\oplus R(1-e)$. Since $R$ is
projective-free, we get $Re=0$ or $R(1-e)=0$. So $e=0$ or $e=1$. Now
let $r\notin U(R)$. Then because $R$ is an exchange ring, there
exists $e^2=e$ such that $e\in Rr$ and $1-e\in R(1-r)$. That is,
$1\in Rr$ or $1\in R(1-r)$. But $r\notin U(R)$, so $1\in R(1-r)$.
Similarly, $1\in (1-r)R$. So $1-r\in U(R)$. Therefore, $R$ is local.

$``({\it 3}) \Rightarrow ({\it 5})"$. This is clear.

$``({\it 5})\Rightarrow ({\it 2})".$ This is a result of
\cite{CY94}.

 $``({\it 1})\Leftrightarrow ({\it 6})\Leftrightarrow ({\it 7}) ".$
 This is Theorem \ref{prop:1.9}.
\end{proof}
We have not determined whether the rings in Proposition
\ref{prop:0.2} must be local under the hypothesis. However, the {\rm
SRC} hypothesis forces locality for $n\geq 2$, as the next
proposition shows.

\begin{prop}\label{thm:0.3} Let $R$ be a $n$-${\rm
SRC}$ ring for some $n\geq 2$. Then $R$ is local.
\end{prop}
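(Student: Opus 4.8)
The plan is to prove the two defining properties of a local ring separately: that an $n$-{\rm SRC} ring $R$ (with $n\ge 2$) has only the trivial idempotents $0,1$, and that it has a unique maximal ideal. The argument parallels \propref{thm:0.1} and \propref{prop:0.2}, but the essential new ingredient is the unimodularity condition, which is precisely what rules out the ``mixed'' factorizations that keep a product of fields $n$-{\rm SR} for small $n$; this is why {\rm SRC} forces locality already for $n=2,3$, where {\rm SR} does not. As a preliminary I would record that the $n$-{\rm SRC} property passes to quotients of nonzero rings: given a monic $\bar g\in (R/I)[t]$ of degree $n$, lift it to a monic $g\in R[t]$, take an {\rm SRC} factorization $g=g_0g_1$, and reduce modulo $I$. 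Monicity, unimodularity, and the unit conditions are all preserved by the quotient map, and in the one degenerate case where the two idempotents become equal mod $I$ one repairs matters by passing to the trivial factorization $\bar g=\bar g\cdot 1$, as in the first part of the remark following Definition~\ref{defn:1.4}.

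For the first step, suppose toward a contradiction that $R$ has a nontrivial idempotent, so $R\cong R_1\times R_2$ with both factors nonzero. Choosing a maximal ideal $\mathfrak m_i\subset R_i$ for each $i$ and reducing, I obtain a quotient of $R$ of the form $F_1\times F_2$, a product of two fields, which is again $n$-{\rm SRC}. I would then exhibit a monic polynomial of degree $n$ admitting no {\rm SRC} factorization over $F_1\times F_2$, namely $f=\bigl(t^{n-1}(t-1),\,t^{n}\bigr)$. The key point is that in any unimodular factorization $f=f_0f_1$ the second coordinates $f_0^{(2)},f_1^{(2)}$ are coprime monic divisors of $t^{n}$ in $F_2[t]$, forcing $\{f_0^{(2)},f_1^{(2)}\}=\{1,t^{n}\}$; since $f_0,f_1$ are monic over the product ring their coordinate degrees must agree, so the factorization is forced to be the trivial one, with one factor equal to $1$ and the other equal to $f$. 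But every idempotent of $F_1\times F_2$ has the form $(p,q)$ with $p,q\in\{0,1\}$, and $f(p,q)=(p^{n-1}(p-1),\,q^{n})=(0,q)$ is never a unit. Hence no {\rm SRC} factorization of $f$ can satisfy the unit-at-an-idempotent requirement, contradicting that $F_1\times F_2$ is $n$-{\rm SRC}. Therefore $R$ has only the trivial idempotents.

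For the second step, knowing that $R$ has only trivial idempotents, suppose it had two distinct maximal ideals $\mathfrak m_1,\mathfrak m_2$. By the Chinese Remainder Theorem $R/(\mathfrak m_1\cap\mathfrak m_2)\cong R/\mathfrak m_1\times R/\mathfrak m_2$ possesses a nontrivial idempotent; yet this ring is a quotient of the $n$-{\rm SRC} ring $R$, hence itself $n$-{\rm SRC}, and so by the first step it can have no nontrivial idempotent---a contradiction. Thus $R$ has a unique maximal ideal and is local. I expect the main obstacle, and the real content of the proof, to lie in the first step: one must see that coprimality in the $t^{n}$ coordinate collapses every factorization to the trivial one, while the chosen first coordinate $t^{n-1}(t-1)$ is engineered to vanish simultaneously at both idempotent values $0$ and $1$, so that the {\rm SR} unit condition can never be met once the factorization is trivial.
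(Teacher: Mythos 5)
Your proof is correct, but it takes a genuinely different route from the paper's. The paper works directly in $R$: given a nontrivial idempotent $e$, it considers the single polynomial $f(t)=t^{n}-et\in R[t]$, rules out trivial factorizations because $f(g)=g(1-e)$ annihilates $e\neq 0$ for every idempotent $g$, and rules out nontrivial unimodular factorizations by reducing modulo a maximal ideal containing $e$, where $f$ becomes $t^{n}$ over a field and both monic factors of positive degree become powers of $t$, contradicting unimodularity; it then deduces locality from the facts that ${\mathbb M}_{n}(R)$ strongly clean forces $R$ strongly clean (passage to corners) and that a strongly clean ring with only trivial idempotents is local. You instead first establish that the $n$-{\rm SRC} property passes to quotients, reduce to a product of two fields, and use the two-coordinate polynomial $\bigl(t^{n-1}(t-1),\,t^{n}\bigr)$, where unimodularity in the $t^{n}$ coordinate collapses any factorization to the trivial one and the first coordinate, vanishing at both $0$ and $1$, kills the unit-at-an-idempotent condition; you then obtain uniqueness of the maximal ideal from the Chinese Remainder Theorem exactly as in the second half of Proposition \ref{thm:0.1}. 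The common core is the same observation---coprime monic divisors of $t^{n}$ over a field must be $1$ and $t^{n}$---but your version is more self-contained (it does not invoke Theorem \ref{thm:1.6}, corner rings, or the strongly-clean characterization of locality), at the cost of the extra quotient-passing lemma, including the small repair, which you correctly supply, for the case where the two idempotents of an {\rm SRC} factorization become equal in the quotient.
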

\begin{proof} By Theorem \ref{thm:1.6}, ${\mathbb M}_{n}(R)$ is strongly
clean. Since it is known that strong cleanness passes to corners,
$R$ must therefore be a strongly clean ring. It will therefore
suffice to show that $R$ has no nontrivial idempotents, since a ring
with only trivial idempotents is strongly clean iff it is local by
Proposition \ref{prop:12}. The result now follows from Theorem
\ref{thm:0.1} for $n\geq 4$. However, we give a different,
elementary argument, that works for all $n\geq 2$, rather than
handing only the cases $n=2$ and $n=3$ separately. Suppose that
$e\in R$ is a nontrivial idempotent. Consider the polynomial
$f(t)=t^{n}-et \in R[t]$. Since $R$ is an $n$-${\rm SRC}$ ring,
there is a factorization $f=f_{0}(t)f_{1}(t)$ of $f(t)$ into monic
polynomials such that $(f_{0}(t), f_{1}(t))$ is unimodular and such
that there are idempotents $e_{0}, e_{1}\in R$ such that
$f_{0}(e_{0}), f_{1}(e_{1})$ are units in $R$. We claim that such a
factorization cannot exist.

A trivial factorization cannot occur, since if $g^{2}=g \in R$, then
$f(g)=g(1-e)$ cannot be a unit (since it annihilates $e\not=0$).
Thus, $f_{0}, f_{1}$ are unimodular polynomials, and each has degree
at least $1$. It will therefore suffice to show that $f$ does not
have a nontrivial factorization as a product of a pair of unimodular
monic polynomials. Indeed, let $f=f_{0}f_{1}$ be such a
factorization. Since $e$ is not a unit, $e \in m$ for some maximal
ideal $m$. Since the images of $f_{0}$ and $f_{1}$ are unimodular in
$(R/m)[t]$, we may assume that $R$ is a field and that $f(t)=t^{n}$.
But $R[t]$ is then a UFD, in which case $f_{0}$ and $f_{1}$, must
be, up to units, each power of $t$, but this forces
$f_{0}R[t]+f_{1}R[t]\subseteq tR[t]$, since $f_{0}$ and $f_{1}$ were
monic polynomials with degree at least $1$. This is a contradiction,
and we conclude that the original strongly clean ring $R$ has only
the trivial idempotents, and hence is local.
\end{proof}
Now we immediately get the following result.
\begin{thm}\label{thm:13} Let $R$ be a commutative ring and $(n\geq 2)$. Then the following are equivalent:\begin{enumerate}\item $R$ is an $n$-${\rm
SRC}$ ring . \item $R$ is a local $n$-${\rm SRC}$ ring.\item $R$ is
local and ${\mathbb M}_{n}(R)$ is strongly clean.\end{enumerate}
\end{thm}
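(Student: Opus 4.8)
The plan is to prove the cyclic chain $(1)\Rightarrow(2)\Rightarrow(3)\Rightarrow(1)$, assembling the tools already developed in this section together with the local-case characterization of \cite{BDD051}. The first two implications are essentially bookkeeping; the only genuine content lies in reconciling Definition \ref{defn:1.4} with the $n$-{\rm SRC} notion of \cite{BDD051} for the implication $(3)\Rightarrow(1)$.

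For $(1)\Rightarrow(2)$, I would simply invoke Proposition \ref{thm:0.3}: since $R$ is $n$-{\rm SRC} with $n\ge 2$, it is automatically local, so it is a local $n$-{\rm SRC} ring. For $(2)\Rightarrow(3)$, an $n$-{\rm SRC} ring $R$ has $\mathbb{M}_n(R)$ strongly clean by Theorem \ref{thm:1.6}, and $R$ is local by hypothesis, giving $(3)$ at once.

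The substantive step is $(3)\Rightarrow(1)$, where I would appeal to the theorem of Borooah, Diesl, and Dorsey that a commutative local ring $R$ has $\mathbb{M}_n(R)$ strongly clean if and only if $R$ is $n$-{\rm SRC} in their sense. To close the loop I must check that their definition agrees with Definition \ref{defn:1.4} when $R$ is local. Since a local ring has only the idempotents $0$ and $1$, the requirement $e_0\ne e_1$ in Definition \ref{defn:1.4} forces $\{e_0,e_1\}=\{0,1\}$, so an {\rm SR} factorization in our sense reads (after relabeling) $f_0(0),\,f_1(1)\in U(R)$, which is exactly the {\rm SR} condition of \cite{BDD051}. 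For the coprimality half, the discussion preceding Definition \ref{defn:1.4} records that, for monic polynomials over a local ring, $(f_0,f_1)$ is unimodular in $R[t]$ iff $\gcd(\overline{f}_0,\overline{f}_1)=1$ in $\overline{R}[t]$; hence the {\rm SRC} conditions coincide. With the definitions identified, the cited theorem yields that $R$ is $n$-{\rm SRC}, completing the cycle.

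The main obstacle, such as it is, is this identification of the two definitions rather than any hard argument: once one observes that locality kills all nontrivial idempotents and recalls the unimodularity/gcd equivalence, everything reduces to direct citations of Proposition \ref{thm:0.3}, Theorem \ref{thm:1.6}, and the local characterization of \cite{BDD051}. This is consistent with the paper's framing that the result follows \emph{immediately}.
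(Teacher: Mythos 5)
Your proposal is correct and follows essentially the same route as the paper: Proposition \ref{thm:0.3} for locality, Theorem \ref{thm:1.6} for strong cleanness, and the local-case characterization of \cite{BDD051} (their Corollary 15) to close the cycle. Your explicit check that Definition \ref{defn:1.4} collapses to the definition of \cite{BDD051} over a local ring is a worthwhile detail the paper leaves implicit (it is addressed only in the remarks preceding the theorem), but it does not change the argument.
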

\begin{proof}$``(1)\Leftrightarrow (2)".$ By Proposition
\ref{thm:0.3}.

\hspace{0.25 in}$``(2) \Rightarrow (3)".$ By Theorem \ref{thm:1.6}.

\hspace{0.25 in}$``(3)\Rightarrow (2)".$ By \cite[Corollary
15]{BDD051}.
\end{proof}
But for a commutative ring $R$, being an $n$-{\rm SRC} ring is not a
necessary condition for ${\mathbb M}_{n}(R)$ to be strongly clean.

\begin{exa}\label{exa:1.8} Let $R$ be a Boolean ring with more than $2$ elements.
Then $R$ is not an $n$-{\rm SRC} ring for $n\ge2$ because Boolean
rings other than ${\mathbb Z}/2{\mathbb Z}$ can not be {\rm SR}
rings by Proposition \ref{prop:0.2} since $t^{(n-1)}(t-1)+1$ is
always irreducible for $n\ge2$. But ${\mathbb M}_{n}(R)$ is strongly
clean for any positive integer $n$.
\end{exa}

 We define {\rm SRC} factorizations and {\rm SRC}-rings
on commutative rings and it is clear about commutative {\rm
SRC}-rings by Theorem \ref{thm:13} now. In fact, they can be defined
on non-commutative rings. For example,  the $(\ast)$-factorization
and $(\ast \ast)$-factorization  used to characterize strong
cleanness of ${\mathbb M}_2(R)$ over a local ring $R$ (need not be
commutative) is essentially  the {\rm SR} and {\rm SRC}
factorization for non-commutative case \cite{YZ071}. However, for
the non-commutative case, we know very little.
\section{Strong cleanness of matrices over  projective-free rings  having {\rm ULP}}
 Section $2$ shows that the theory of {\rm SRC} factorization can not give us
 new classes of strongly clean matrix rings except the local ones. However, it can help us to find all strongly clean matrices over projective-free rings having {\rm
 ULP} (see Definition \ref{defn:19}) even though the matrix ring is not strongly clean. This is the topic of Section $3$.

 A matrix $A\in {\mathbb M}_{n}(R)$ is called {\it
singular} if $A$ is non-invertible and {\it nonsingular} if $A$ is
invertible. Here, we give a more detailed definition related to
singularity of a matrix.
\begin{defn}\label{defn:2.1} A singular matrix $A\in {\mathbb M}_{n}(R)$  is called
 {\textit{purely singular}} if $I-A$ is singular or
 {\textit{semi-purely singular}} if $I-A$ is nonsingular. A
nonsingular matrix $A\in {\mathbb M}_{n}(R)$ is called
 {\textit{purely nonsingular}} if $I-A$ is nonsingular or
 {\textit{semi-purely nonsingular}} if $I-A$ is singular.
\end{defn}
\noindent Every matrix belongs to exactly one of the above four
types. All types of matrices are strongly clean except purely
singular ones. So we have the following lemma.
\begin{lem}\label{lem:2.2}
The matrix ring ${\mathbb M}_n(R)$ is strongly clean if and only if
its purely singular matrices are strongly clean.
\end{lem}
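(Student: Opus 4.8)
The plan is to prove the two implications separately, observing that the forward direction is immediate and that all the content lies in recognizing that three of the four singularity types introduced in Definition \ref{defn:2.1} are \emph{automatically} strongly clean.

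For the forward implication, if ${\mathbb M}_n(R)$ is strongly clean then every element of ${\mathbb M}_n(R)$ is strongly clean, so in particular every purely singular matrix is strongly clean; nothing further is required.

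For the converse, I would argue that it suffices to show that every matrix of the remaining three types --- purely nonsingular, semi-purely nonsingular, and semi-purely singular --- is strongly clean. Indeed, by Definition \ref{defn:2.1} every matrix falls into exactly one of the four types, so combining this with the hypothesis on purely singular matrices yields that every element of ${\mathbb M}_n(R)$ is strongly clean. The key observation is that both $0$ and the identity matrix $I$ are central idempotents of ${\mathbb M}_n(R)$, so any decomposition $A = E + U$ with $E \in \{0, I\}$ and $U \in U({\mathbb M}_n(R))$ is automatically a strongly clean decomposition, the commuting condition $EU = UE$ being free. I would then supply the explicit decompositions: if $A$ is nonsingular (which covers both the purely nonsingular and semi-purely nonsingular cases), write $A = 0 + A$ with $A \in U({\mathbb M}_n(R))$; if $A$ is singular with $I - A$ nonsingular (the semi-purely singular case), write $A = I + (A - I)$, where $A - I = -(I - A)$ is a unit.

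There is essentially no obstacle here: once the four-type dichotomy of Definition \ref{defn:2.1} is in place, the lemma reduces to a direct bookkeeping argument, and the only point to verify is that $0$ and $I$ are central in ${\mathbb M}_n(R)$ so that the exhibited decompositions genuinely satisfy the commuting requirement in the definition of a strongly clean element. The lemma then isolates the purely singular matrices as the sole obstruction to strong cleanness of the full matrix ring, which is precisely the reduction exploited in the remainder of Section $3$.
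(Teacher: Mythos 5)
Your proof is correct and follows exactly the route the paper intends: the paper justifies this lemma with the single remark that every matrix falls into one of the four types of Definition \ref{defn:2.1} and that all types except the purely singular ones are automatically strongly clean, which is precisely the observation you make explicit via the decompositions $A=0+A$ for nonsingular $A$ and $A=I+(A-I)$ for semi-purely singular $A$. No gaps; your write-up simply supplies the details the paper leaves implicit.
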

\begin{lem}\label{lem:2.3}\cite{YZ072}
Let $R$ be a projective-free ring. Then a purely singular matrix
$T\in {\mathbb M}_n(R)$ is strongly clean  iff  $T$ is similar to
$C=\scriptsize \left(
\begin{array}{cc}
T_{0} & 0\\
 0&T_{1}
\end{array}
\right)\normalsize $ where $T_{0}$ is semi-purely nonsingular and
$T_{1}$ is semi-purely singular.
\end{lem}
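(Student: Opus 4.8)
The plan is to translate strong cleanness of $T$ into the existence of a $T$-invariant direct-sum decomposition of $R^n$ of a prescribed type, using two general facts. First, strong cleanness is preserved under similarity: conjugating $T=E+U$ by $P\in GL_n(R)$ yields $PTP^{-1}=PEP^{-1}+PUP^{-1}$, still a commuting idempotent-plus-unit. Second, a strongly clean decomposition $T=E+U$ produces a $T$-invariant splitting $R^n=(I-E)R^n\oplus ER^n$, since $EU=UE$ forces $ET=TE$. Here the hypothesis that $R$ is projective-free is essential: it guarantees that the two summands $(I-E)R^n$ and $ER^n$ are free, so that choosing bases turns this splitting into an honest block-diagonal similarity $T\sim \operatorname{diag}(T_0,T_1)$ with $T_0=T|_{(I-E)R^n}$ and $T_1=T|_{ER^n}$.

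For the ``if'' direction I would argue as follows. By similarity-invariance it suffices to treat $T=C$ itself. Each diagonal block is strongly clean for a trivial reason: $T_0$ is a unit, so $T_0=0+T_0$; and $T_1$ is semi-purely singular, so $I-T_1$ is a unit and $T_1=I+(T_1-I)$ exhibits $T_1$ as idempotent-plus-unit. Assembling these, $E=\operatorname{diag}(0,I)$ is an idempotent commuting with the unit $U=\operatorname{diag}(T_0,T_1-I)$, and $T=E+U$ is a strongly clean decomposition. One also checks, via multiplicativity of the determinant across the blocks, that such a $C$ is automatically purely singular, so the two prescribed block-types exactly match the purely singular hypothesis.

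For the ``only if'' direction, suppose $T$ is purely singular and strongly clean, $T=E+U$. Forming the block-diagonal similarity above, I would read off the block types. On $ER^n$ the idempotent acts as the identity, so $T_1=I+U|_{ER^n}$ and hence $I-T_1=-U|_{ER^n}$ is a unit; on $(I-E)R^n$ the idempotent acts as $0$, so $T_0=U|_{(I-E)R^n}$ is a unit. It then remains to feed in the purely singular hypothesis. Since $\det T=\det T_0\cdot\det T_1$ with $\det T_0$ a unit, singularity of $T$ forces $T_1$ singular, whence $T_1$ is semi-purely singular; since $\det(I-T)=\det(I-T_0)\cdot\det(I-T_1)$ with $\det(I-T_1)$ a unit, singularity of $I-T$ forces $I-T_0$ singular, whence $T_0$ is semi-purely nonsingular. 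This is exactly the asserted form.

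I expect the main obstacle to be the ``only if'' direction, specifically the two points where projective-freeness does the real work: realizing the idempotent splitting coming from $E$ as a genuine block-diagonal similarity (which needs both summands to be free), and then the determinant-multiplicativity bookkeeping that transfers the \emph{global} conditions ``$T$ singular'' and ``$I-T$ singular'' onto the individual blocks. The ``if'' direction, by contrast, is essentially formal once one notices that each prescribed block type is individually strongly clean.
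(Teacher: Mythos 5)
The paper gives no proof of this lemma at all: it is imported verbatim from \cite{YZ072} and used as a black box, so there is no in-paper argument to compare yours against. Judged on its own, your proof is correct in substance and is the expected one: similarity-invariance of strong cleanness, the $T$-invariant splitting $R^n=(I-E)R^n\oplus ER^n$ coming from $ET=TE$, projective-freeness (including the unique-rank clause, which gives $n=\operatorname{rank}(ER^n)+\operatorname{rank}((I-E)R^n)$) to realize this as a block-diagonal similarity, and the observation that $E$ acts as $0$ on one summand and as $I$ on the other, so $T_0=U|_{(I-E)R^n}$ is a unit and $I-T_1=-U|_{ER^n}$ is a unit. The one step you should repair is the appeal to determinants: the lemma is stated for an arbitrary projective-free ring $R$, with no commutativity hypothesis (the source \cite{YZ072} concerns general local rings), so $\det$ is not available. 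Fortunately, everywhere you invoke it you only need that $\operatorname{diag}(T_0,T_1)$ is invertible if and only if both blocks are, which one checks directly by writing a two-sided inverse in block form; with that substitution, the transfer of ``$T$ singular'' to ``$T_1$ singular'' and of ``$I-T$ singular'' to ``$I-T_0$ singular'' goes through verbatim, as does your check that the block form is automatically purely singular. It would also be worth one explicit sentence noting that for purely singular $T$ the idempotent $E$ can be neither $0$ nor $I$ (otherwise $T$, respectively $I-T$, would be a unit), so both summands are nonzero and the decomposition is genuinely into two blocks; your bookkeeping handles this implicitly but silently.
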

By this lemma, we get a necessary condition for a matrix to be
strongly clean when $R$ is  commutative projective-free.
\begin{cor}\label{cor:2.4}
 Let $R$ be a commutative projective-free ring. If $T\in {\mathbb M}_n(R)$ is strongly clean,
then $\chi_{T}(t)$ has an $n$-{\rm SR} factorization.
\end{cor}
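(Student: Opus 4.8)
The plan is to exploit the four-fold classification of matrices in Definition~\ref{defn:2.1} and to reduce everything to the single genuinely interesting type, the purely singular matrices, where \lemref{lem:2.3} does the real work. Since $R$ is commutative, I can freely use that similar matrices share a characteristic polynomial (via multiplicativity of $\det$), that $\chi_T(0)=(-1)^n\det T$, and that the characteristic polynomial of a block-diagonal matrix is the product of those of its blocks.

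First I would dispose of the three easy types by exhibiting trivial factorizations built from the idempotents $e_0=0$ and $e_1=1$ (the constant $1$ being monic of degree $0$). If $T$ is nonsingular (purely or semi-purely nonsingular), then $\det T\in U(R)$, hence $\chi_T(0)=(-1)^n\det T\in U(R)$, so $\chi_T=\chi_T\cdot 1$ is an $\mathrm{SR}$ factorization. Dually, if $T$ is semi-purely singular then $I-T$ is nonsingular, so $\chi_T(1)=\det(I-T)\in U(R)$ and $\chi_T=1\cdot\chi_T$ is an $\mathrm{SR}$ factorization. This matches the convention already used in the proof of \thmref{prop:1.9}, where a factor equal to $1$ is permitted.

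The main case is $T$ purely singular. Here I would invoke the hypothesis that $T$ is strongly clean together with \lemref{lem:2.3}: $T$ is then similar to a block-diagonal matrix $C=\begin{pmatrix} T_0 & 0\\ 0 & T_1\end{pmatrix}$ with $T_0$ semi-purely nonsingular and $T_1$ semi-purely singular. Consequently $\chi_T=\chi_C=\chi_{T_0}\chi_{T_1}$, both factors monic. Since $T_0$ is nonsingular, $\chi_{T_0}(0)\in U(R)$, and since $I-T_1$ is nonsingular, $\chi_{T_1}(1)=\det(I-T_1)\in U(R)$. Taking $f_0=\chi_{T_0}$, $f_1=\chi_{T_1}$, $e_0=0$, $e_1=1$ then yields the desired $\mathrm{SR}$ factorization of $\chi_T$.

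The only substantial input is \lemref{lem:2.3}, which packages the structure theory of strongly clean purely singular matrices over projective-free rings; everything else is routine determinant bookkeeping. I expect the main obstacle to be conceptual rather than computational: it is precisely the block-diagonalization provided by \lemref{lem:2.3} that makes the projective-free and commutative hypotheses essential, since only in that setting do determinants, characteristic polynomials, and singularity behave well enough to both split $\chi_T$ as a product and to read off unit values at $0$ and $1$ from the (non)singularity of $T_0$ and $I-T_1$.
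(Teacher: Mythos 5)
Your proposal is correct and follows essentially the same route as the paper's proof: dispose of the non--purely-singular cases with the trivial factorizations $\chi_T\cdot 1$ and $1\cdot\chi_T$ at $e_0=0$, $e_1=1$, and in the purely singular case apply Lemma~\ref{lem:2.3} to get $\chi_T=\chi_{T_0}\chi_{T_1}$ with $\chi_{T_0}(0)$ and $\chi_{T_1}(1)$ units. No substantive differences to report.
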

\begin{proof} If $T$ is  nonsingular, then
$\chi_{T}(t)=\det(tI-T)=f_{0}(t)f_{1}(t)= \chi_{T}(t)\cdot 1$ with
$f_{0}(t) =\chi_{T}(t)$, $f_{1}(t)=1$, $e_{0}=0$, and $e_{1}=1$ is
an $n$-{\rm SR} factorization. If $T$ is semi-purely singular, then
$\chi_{T}(t)=\det(tI-T)=f_{0}(t)f_{1}(t)=1\cdot \chi_{T}(t)$
 with $f_{0}(t)=1$, $f_{1}(t) =\chi_{T}(t)$, $e_{0}=0$, and $e_{1}=1$ is an $n$-{\rm SR} factorization. If $T$ is purely
singular, then, by Lemma \ref{lem:2.3}, $T$ is similar to
$C=\scriptsize \left(
\begin{array}{cc}
T_{0} & 0\\
 0&T_{1}
\end{array}
\right)\normalsize $ where $T_{0}$ is semi-purely nonsingular and
$T_{1}$ is semi-purely singular. So
$\chi_{T}(t)=\chi_{T_{0}}(t)\cdot \chi_{T_{1}}(t)$ with
$f_{0}(t)=\chi_{T_{0}}(t)$, $f_{1}(t)=\chi_{T_{1}}(t)$, $e_{0}=0$,
and $e_{1}=1$ is an $n$-{\rm SR} factorization.
\end{proof}
\begin{defn}\label{defn:19} A commutative ring $R$ is said to have the {\it
unimodular lifting property} ({\rm ULP} for short) if, for any pair
$(f_{0}(t), f_{1}(t))$ of monic polynomials in $R[t]$, the
unimodularity of
$\left(\overline{f}_{0}(t),\overline{f}_{1}(t)\right)$ in
$\frac{R}{\mathfrak{m}}[t]$ for all $\mathfrak{m}\in \text{Max}(R)$
implies the unimodularity of $(f_{0}(t), f_{1}(t))$ in $R[t]$.
\end{defn}
 A ring $R$ is
{\it semilocal} if $R/J(R)$ is semisimple. A commutative ring is
semilocal iff it has finitely many maximal ideals.
\begin{prop}\label{cor:2.3.6} Commutative semilocal rings have  {\rm ULP}. \end{prop}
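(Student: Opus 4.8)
The plan is to recast unimodularity as the vanishing of a quotient and then verify that vanishing locally. Set $A = R[t]/\big(f_{0}(t)R[t] + f_{1}(t)R[t]\big)$; by definition $(f_{0}, f_{1})$ is unimodular in $R[t]$ precisely when $A = 0$. The hypothesis says exactly that $A/\mathfrak{m}A = (R/\mathfrak{m})[t]/(\overline{f}_{0}, \overline{f}_{1}) = 0$ for every $\mathfrak{m}\in \mathrm{Max}(R)$, since reduction modulo $\mathfrak{m}R[t]$ identifies $R[t]/\mathfrak{m}R[t]$ with $(R/\mathfrak{m})[t]$, and unimodularity over the field $R/\mathfrak{m}$ is precisely the statement that this quotient is trivial. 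So the entire problem is to pass from ``$A/\mathfrak{m}A = 0$ for all $\mathfrak{m}$'' to ``$A = 0$''.

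First I would record the one structural fact that makes this work: because $f_{0}$ is \emph{monic}, say of degree $d$, the ring $R[t]/(f_{0})$ is a free $R$-module of rank $d$, and $A$ is a quotient of it; hence $A$ is a \emph{finitely generated} $R$-module. This is the only place monic-ness is used, and it is essential. Now fix $\mathfrak{m}\in \mathrm{Max}(R)$ and localize: $A_{\mathfrak{m}}$ is a finitely generated module over the local ring $R_{\mathfrak{m}}$, and $A_{\mathfrak{m}}/\mathfrak{m}A_{\mathfrak{m}} = (A/\mathfrak{m}A)_{\mathfrak{m}} = 0$. Nakayama's lemma then forces $A_{\mathfrak{m}} = 0$. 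Since a module vanishes as soon as all of its localizations at maximal ideals vanish, we conclude $A = 0$, that is, $(f_{0}, f_{1})$ is unimodular in $R[t]$, as desired.

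It is worth noting that this argument nowhere uses that $R$ has only finitely many maximal ideals, so it in fact establishes {\rm ULP} for every commutative ring, and the semilocal hypothesis can be read as the natural special case. If one did want an argument that visibly exploits semilocality, the temptation is to combine the residue-field data via the Chinese Remainder Theorem, $R/J(R) \cong \prod_{i=1}^{k} R/\mathfrak{m}_{i}$, over which $(\overline{f}_{0}, \overline{f}_{1})$ is unimodular componentwise, yielding a single Bézout identity $f_{0}g_{0} + f_{1}g_{1} = 1 - j$ with $j\in J(R)[t]$. The main obstacle, and the reason I prefer the localization route, is that this identity cannot simply be inverted: $1 - j$ need not be a unit in $R[t]$, because $J(R)[t] \not\subseteq J(R[t])$, so lifting coprimality from $R/J(R)$ back to $R$ genuinely requires either the Nakayama argument above or, equivalently, the observation that $(f_{0}, f_{1})$ is unimodular iff the resultant $\mathrm{Res}(f_{0}, f_{1})$ — whose reduction modulo $\mathfrak{m}$ equals $\mathrm{Res}(\overline{f}_{0}, \overline{f}_{1})$ precisely because the $f_{i}$ are monic — lies in no maximal ideal and is hence a unit.
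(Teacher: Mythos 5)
Your proof is correct, and it actually establishes more than the stated proposition. Both your argument and the paper's rest on the same two pillars: because $f_{0}$ is monic, $A=R[t]/\bigl(f_{0}(t)R[t]+f_{1}(t)R[t]\bigr)$ is a finitely generated $R$-module, and Nakayama's lemma is then used to force $A=0$. The difference is where Nakayama is applied. The paper uses semilocality head-on: for each of the finitely many maximal ideals $\mathfrak{m}_{k}$ it writes $f_{0}a_{k}+f_{1}b_{k}+c_{k}=1$ with $c_{k}\in\mathfrak{m}_{k}[t]$, multiplies these identities to obtain $f_{0}a'+f_{1}b'+c'=1$ with $c'\in J(R)[t]$, concludes $J(R)A=A$, and applies Nakayama once, globally, over $R$. (Your third paragraph describes exactly this route and correctly diagnoses the pitfall it must avoid: $1-c'$ need not be a unit of $R[t]$ since $J(R)[t]\not\subseteq J(R[t])$, so the Nakayama step is genuinely needed there too.) You instead localize at an arbitrary maximal ideal, apply Nakayama over the local ring $R_{\mathfrak{m}}$, and finish with the local--global principle for vanishing of modules. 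What your route buys is that it nowhere uses finiteness of $\mathrm{Max}(R)$: it shows that \emph{every} commutative ring has {\rm ULP}, which subsumes Proposition \ref{cor:2.3.6}, Corollary \ref{cor:2.3.7}, and Proposition \ref{prop:2.3.8} at once and answers (affirmatively, and trivially) the Question posed after Corollary \ref{cor:2.8}. Your closing remark about the resultant is also a valid one-line substitute: $\mathrm{Res}(f_{0},f_{1})$ lies in $\bigl(f_{0}R[t]+f_{1}R[t]\bigr)\cap R$, its reduction modulo $\mathfrak{m}$ equals $\mathrm{Res}(\overline{f}_{0},\overline{f}_{1})$ because the $f_{i}$ are monic, and the hypothesis makes that reduction nonzero for every $\mathfrak{m}$, so the resultant is a unit.
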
 \begin{proof} Let $R$ be a commutative
semilocal ring. Then $R$ has finitely many maximal ideals, say
${\mathfrak{m_{\text{1}}}}, \cdots, {\mathfrak{m_{\text{n}}}}$. Let
$f_{0}(t), f_{1}(t) \in R[t]$ be monic polynomials and
$\left(\overline{f}_{0}(t),\overline{f}_{1}(t)\right)$ be unimodular
in $\frac{R}{{{\mathfrak{m}}_{k}}}[t]$ for $k=1, 2, \cdots, n$.
Since
$\overline{f}_{0}(t)\frac{R}{{{\mathfrak{m}}_{k}}}[t]+\overline{f}_{1}(t)\frac{R}{{{\mathfrak{m}}_{k}}}[t]=\frac{R}{{{\mathfrak{m}}_{k}}}[t]$,
 we get
$f_{0}(t)R[t]+f_{1}(t)R[t]+{\mathfrak{m}}_{k}[t]=R[t]$. Hence,
$f_{0}(t)a_{k}(t)+f_{1}(t)b_{k}(t)+c_{k}(t)=1$ for some $a_{k}(t),
b_{k}(t)\in R[t]$ and $c_{k}(t)\in{\mathfrak{m}}_{k}[t]$. Therefore,
$$1=\Pi_{k=1}^{n}\left(f_{0}(t)a_{k}(t)+f_{1}(t)b_{k}(t)+c_{k}(t)\right)=f_{0}(t)a^{'}(t)+f_{1}(t)b^{'}(t)+c^{'}(t)$$
for some $a^{'}(t), b^{'}(t)\in R[t]$ and $c^{'}(t)\in J(R)[t]$.
Thus,
$R[t]=f_{0}(t)R[t]+f_{1}(t)R[t]+c^{'}(t)R[t]=f_{0}(t)R[t]+f_{1}(t)R[t]+J(R)R[t]$.
Notice that $\frac{R[t]}{f_{0}(t)R[t]+f_{1}(t)R[t]}$ is a finitely
generated $R$-module and
$J(R)\frac{R[t]}{f_{0}(t)R[t]+f_{1}(t)R[t]}=\frac{J(R)R[t]+f_{0}(t)R[t]+f_{1}(t)R[t]}{f_{0}(t)R[t]+f_{1}(t)R[t]}=\frac{R[t]}{f_{0}(t)R[t]+f_{1}(t)R[t]}$.
So, $f_{0}(t)R[t]+f_{1}(t)R[t]=R[t]$ by Nakayama Lemma.
 Therefore, $(f_{0}(t), f_{1}(t))$ is  unimodular in $R[t]$.
\end{proof}
\begin{cor}\label{cor:2.3.7}Commutative local
rings have  {\rm ULP}.\end{cor}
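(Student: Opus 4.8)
The plan is to observe that a commutative local ring is a special case of the rings already treated in Proposition \ref{cor:2.3.6}, so that the corollary follows immediately by specialization. First I would note that if $R$ is local with maximal ideal $\mathfrak{m}$, then $\mathfrak{m}$ is the unique maximal ideal of $R$; in particular $R$ has only finitely many (indeed, exactly one) maximal ideals, and hence $R$ is commutative semilocal. Invoking Proposition \ref{cor:2.3.6} then yields directly that $R$ has {\rm ULP}, which completes the argument. There is essentially no obstacle to this deduction, since the entire content is the elementary fact that local rings are semilocal.

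For completeness, I would alternatively remark that the claim can be extracted directly as the single-ideal ($n=1$) instance of the computation in the proof of Proposition \ref{cor:2.3.6}. Given monic polynomials $f_{0}(t), f_{1}(t) \in R[t]$ with $\left(\overline{f}_{0}(t), \overline{f}_{1}(t)\right)$ unimodular in $\frac{R}{\mathfrak{m}}[t]$, one obtains $f_{0}(t)R[t] + f_{1}(t)R[t] + \mathfrak{m}[t] = R[t]$. Since $\mathfrak{m} = J(R)$ for a local ring, the Nakayama Lemma applied to the finitely generated $R$-module $\frac{R[t]}{f_{0}(t)R[t] + f_{1}(t)R[t]}$ forces $f_{0}(t)R[t] + f_{1}(t)R[t] = R[t]$, i.e.\ the unimodularity of $(f_{0}(t), f_{1}(t))$ in $R[t]$. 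The only point worth verifying is that $J(R) = \mathfrak{m}$ for a local ring, which is immediate from the characterization of the Jacobson radical as the intersection of all maximal ideals. Either route gives the result with no nontrivial step, so I expect the write-up to be a one- or two-line deduction from the preceding proposition.
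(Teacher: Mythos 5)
Your proposal is correct and matches the paper's (implicit) reasoning exactly: the corollary is stated without proof precisely because a local ring has a unique maximal ideal, hence is semilocal, and Proposition~\ref{cor:2.3.6} applies. The alternative direct Nakayama argument you sketch is just the $n=1$ case of that proposition's proof, so nothing further is needed.
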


\begin{prop}
\label{prop:2.3.8}Every {\rm UFD} has {\rm ULP}.
\end{prop}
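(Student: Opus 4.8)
The plan is to use the resultant of the two monic polynomials, which lands inside the ideal they generate and detects coprimality after every residue-field reduction. Fix monic $f_0(t), f_1(t) \in R[t]$ of degrees $m$ and $n$, and suppose $(\overline{f}_0, \overline{f}_1)$ is unimodular in $(R/\mathfrak{m})[t]$ for every $\mathfrak{m} \in \text{Max}(R)$. Let $r = \text{Res}(f_0, f_1) \in R$ be their resultant. The classical Sylvester identity provides $a(t), b(t) \in R[t]$ with $a(t)f_0(t) + b(t)f_1(t) = r$; in particular $r \in (f_0, f_1)$, so it suffices to prove $r \in U(R)$.

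Second, I would check that the resultant commutes with reduction modulo each maximal ideal. Since $f_0, f_1$ are monic, their images $\overline{f}_0, \overline{f}_1$ remain monic of the same degrees $m, n$, so the Sylvester matrix does not change size under the quotient map $R \to R/\mathfrak{m}$; as the determinant commutes with ring homomorphisms, $\overline{r} = \text{Res}(\overline{f}_0, \overline{f}_1)$ in $R/\mathfrak{m}$. Then, over the field $R/\mathfrak{m}$ the ring $(R/\mathfrak{m})[t]$ is a PID, so unimodularity of $(\overline{f}_0, \overline{f}_1)$ coincides with coprimality, which for nonzero polynomials over a field is equivalent to the nonvanishing of their resultant. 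Hence the hypothesis gives $\overline{r} \neq 0$, i.e. $r \notin \mathfrak{m}$, for every $\mathfrak{m} \in \text{Max}(R)$. An element lying in no maximal ideal is a unit, so $r \in U(R)$, and then $r^{-1}a(t) f_0(t) + r^{-1}b(t)f_1(t) = 1$ exhibits $(f_0, f_1)$ as unimodular in $R[t]$.

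The only delicate inputs are the Sylvester identity $af_0 + bf_1 = \text{Res}(f_0,f_1)$ and the base-change compatibility of the resultant; both are routine once one insists that $f_0, f_1$ stay monic of fixed degree, which is exactly why monicity appears in the definition of {\rm ULP}. Notably, the {\rm UFD} hypothesis is not actually used, and the argument goes through verbatim for an arbitrary commutative ring. If one prefers to stay within {\rm UFD} language, one can replace $r$ by $\det$ of the multiplication-by-$f_1$ endomorphism of the free rank-$m$ module $R[t]/(f_0)$, whose cokernel is $R[t]/(f_0, f_1)$; the same reduction argument forces this determinant to be a unit, hence the map to be an isomorphism and the ideal to be all of $R[t]$.

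I expect the main conceptual point (rather than obstacle) to be recognizing that the naive {\rm UFD} route is insufficient: Gauss's lemma and the unique factorization of $R[t]$ do show $\gcd(f_0, f_1) = 1$ in $R[t]$ (since any common monic factor would survive to some residue field and violate coprimality there), but coprimality in $R[t]$ is strictly weaker than unimodularity, as $(2, t) \subsetneq \mathbb{Z}[t]$ shows. It is precisely this gap that the resultant closes, by upgrading ``no common factor mod every $\mathfrak{m}$'' to ``$r$ is a global unit.''
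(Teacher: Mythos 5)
Your resultant argument is correct, and it takes a genuinely different route from the paper's proof --- one that is in fact stronger. The paper works inside the {\rm UFD} $R[t]$: it argues by cases that $\gcd(f_0(t),f_1(t))$ must be a unit and then concludes that $(f_0(t),f_1(t))$ is unimodular. As you anticipate, that final inference is exactly the weak point: coprimality in $R[t]$ is strictly weaker than unimodularity when $R$ is not a field (the monic pair $t$, $t+2$ generates the proper ideal $(2,t)$ of ${\mathbb Z}[t]$, which is the monic version of your example), so the paper's written proof stops one step short of the stated conclusion. Your Sylvester identity $a(t)f_0(t)+b(t)f_1(t)=\mathrm{Res}(f_0,f_1)$ is precisely what closes that gap: monicity fixes the size of the Sylvester matrix, so the resultant commutes with reduction modulo each $\mathfrak{m}\in\text{Max}(R)$; unimodularity over the field $R/\mathfrak{m}$ is equivalent to a nonvanishing resultant there; and an element of $R$ lying outside every maximal ideal is a unit, whence $1\in f_0R[t]+f_1R[t]$. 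Two consequences are worth recording. First, since your argument never invokes unique factorization, it shows that \emph{every} commutative ring has {\rm ULP}; in particular it answers the paper's open question (whether every commutative projective-free ring has {\rm ULP}) affirmatively and renders the {\rm ULP} hypothesis in Theorem \ref{thm:2.7} and Corollary \ref{cor:2.8} automatic. Second, your approach subsumes the paper's Proposition \ref{cor:2.3.6} on semilocal rings as well, since no finiteness of $\text{Max}(R)$ is needed. The proof is complete as written; the only inputs you rely on --- the resultant lying in the ideal with explicit cofactors, and its compatibility with base change for monic polynomials of fixed degree --- are standard over an arbitrary commutative ring.
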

\begin{proof} Let $f_{0}(t), f_{1}(t) \in R[t]$ be monic polynomials and
$\left(\overline{f_{0}}(t), \overline{f_{1}}(t)\right)$ be
unimodular in $\frac{R}{{{\mathfrak{m}}}}[t]$ for every
${\mathfrak{m}}\in \text{Max}(R)$. Then
$\gcd\left(\overline{f_{0}}(t), \overline{f_{1}}(t)\right)$ $=1$ in
$\frac{R}{{{\mathfrak{m}}}}[t]$. We want to prove that
$\gcd(f_{0}(t), f_{1}(t))$ is a unit in $R[t]$. Suppose
$\gcd(f_{0}(t), f_{1}(t))$ is not a unit.

\textbf{Case 1}. $\gcd(f_{0}(t), f_{1}(t))=m \in R$ but $m\notin
U(R)$.

Then there exists ${\mathfrak{m_{0}}} \in \text{Max}(R)$ such that
$m\in {\mathfrak{m_{0}}}$. So $\gcd\left(\overline{f_{0}}(t),
\overline{f_{1}}(t)\right)$$=\overline{m}=0$ in
$\frac{R}{{{\mathfrak{m_{0}}}}}[t]$. This is a contradiction.

\textbf{Case 2}. $\gcd(f_{0}(t), f_{1}(t))=g(t) \in R[t]$ with
$\deg(g(t))\geq 1$ in $R[t]$.

 Then for any
${\mathfrak{m}}\in \text{Max}(R)$, $\gcd\left(\overline{f_{0}}(t),
\overline{f_{1}}(t)\right)\neq 1$ in $\frac{R}{{\mathfrak{m}}}[t]$
because the coefficient of the leading item of $g(t)$ is a unit.

Hence, $(f_{0}(t), f_{1}(t))$ is unimodular in $R[t]$. \end{proof}

Given a monic polynomial
 $f(t)=t^{n}+a_{n-1}t^{n-1}+\cdots+a_{1}t+a_{0}\in R[t]$, the
matrix
 $C_{f}=\scriptsize\left(
\begin{array}{cccccc}
0&0&0&\cdots&0&-a_{0}\\
1&0&0&\cdots&0&-a_{1}\\
0&1&0&\cdots&0&-a_{2}\\
\vdots&\vdots&\vdots&\vdots&\vdots&\vdots\\
0&0&0&\cdots&0&-a_{n-2}\\
0&0&0&\cdots&1&-a_{n-1}\\
\end{array}
\right)\normalsize$ is called the {\it companion matrix} of $f(t).$

\begin{lem}\label{lem:2.6} \cite[Theorem VII.4.3]{Hu74}
 Let $F$ be a field and $f(t)$ be a monic polynomial in $F[t]$. Then
$f(t)$ is the characteristic
 and minimal polynomial of the companion
matrix $C_{f}$.
\end{lem}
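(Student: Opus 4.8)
The plan is to prove the two assertions separately: first that $f(t)=\det(tI-C_{f})$ is the characteristic polynomial of $C_{f}$, and then that $f(t)$ is also its minimal polynomial. Both are standard facts, and the whole argument can be run over any commutative ring, not just a field, though for the minimal-polynomial half I will use that $F$ is a field.

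For the characteristic polynomial I would induct on $n$. The matrix $tI-C_{f}$ carries $t$ on the diagonal, $-1$ on the subdiagonal, and the column $(a_{0},a_{1},\dots,a_{n-2},t+a_{n-1})^{T}$ in the last place. Since its first row has only the two nonzero entries $t$ in position $(1,1)$ and $a_{0}$ in position $(1,n)$, I expand the determinant along that row to obtain two cofactor terms. Deleting row $1$ and column $1$ leaves exactly $tI-C_{g}$ for the monic polynomial $g(t)=t^{n-1}+a_{n-1}t^{n-2}+\cdots+a_{1}$, whose determinant is $g(t)$ by the inductive hypothesis; deleting row $1$ and column $n$ leaves an upper-triangular matrix with $-1$ on the diagonal, of determinant $(-1)^{n-1}$, and the cofactor sign $(-1)^{1+n}$ multiplies this to $+1$. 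Hence $\det(tI-C_{f})=t\,g(t)+a_{0}=t^{n}+a_{n-1}t^{n-1}+\cdots+a_{1}t+a_{0}=f(t)$, with the base case $n=1$ immediate.

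For the minimal polynomial I would exploit that $e_{1}$ is a cyclic vector. Reading columns off the shape of $C_{f}$ gives $C_{f}e_{j}=e_{j+1}$ for $1\le j\le n-1$, so that $e_{1},C_{f}e_{1},\dots,C_{f}^{\,n-1}e_{1}$ are precisely the standard basis vectors $e_{1},\dots,e_{n}$ and are therefore $F$-linearly independent. Consequently, if $g(t)$ is any nonzero polynomial of degree at most $n-1$, then $g(C_{f})e_{1}$ is a nontrivial linear combination of $e_{1},\dots,e_{n}$ and so is nonzero; in particular $g(C_{f})\ne 0$. Thus the minimal polynomial of $C_{f}$ has degree at least $n$. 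It divides the characteristic polynomial $f(t)$ by Cayley--Hamilton, and since both are monic of degree $n$ they must coincide, which shows $f(t)$ is the minimal polynomial.

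The computation is entirely routine, and I do not expect a genuine obstacle since this is a classical fact. The only point demanding care is the sign bookkeeping in the cofactor expansion for the characteristic polynomial; the virtue of organizing that step as an induction is that the signs collapse cleanly to reproduce the monic $f(t)$ rather than $\pm f(t)$.
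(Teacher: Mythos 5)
Your proof is correct: the cofactor expansion along the first row (with the signs worked out as you describe) gives $\det(tI-C_f)=t\,g(t)+a_0=f(t)$ by induction, and the cyclic-vector argument $e_1,C_fe_1,\dots,C_f^{n-1}e_1=e_1,\dots,e_n$ correctly forces the minimal polynomial to have degree $n$ and hence to equal $\chi_{C_f}=f$. The paper offers no proof of this lemma at all --- it is quoted verbatim from Hungerford (Theorem VII.4.3) --- so there is nothing to compare against; your argument is the standard one found in that reference. One small remark: the minimal-polynomial half also does not really need $F$ to be a field, since $g(C_f)e_1\neq 0$ for any nonzero $g$ of degree less than $n$ holds over any commutative ring (a nontrivial $R$-combination of the standard basis vectors has a nonzero coordinate), and division with remainder by the monic $f$ is always available.
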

\begin{thm}\label{thm:2.7}
 Let $R$ be a commutative ring having {\rm ULP} and
$f(t)=t^{n}+a_{n-1}t^{n-1}+\cdots+a_{1}t+a_{0}\in R[t]$. Then the
companion matrix $C_{f}$ is strongly clean iff
$\chi_{C_{f}}(t)=f(t)$ has an $n$-{\rm SRC} factorization.
\end{thm}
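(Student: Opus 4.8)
The plan is to prove the two implications separately. The backward implication is immediate: if $f=\chi_{C_f}$ admits an $n$-SRC factorization, then Corollary~\ref{cor:0.6} (applied to $A=C_f$) gives at once that $C_f$ is strongly clean. So the whole content lies in the forward implication, and the idea is to convert the strong-cleanness datum into an SRC factorization, calling on ULP precisely to secure comaximality. The first move is to exploit that $C_f$ is a companion (hence cyclic) matrix: making $R^n$ an $R[t]$-module by letting $t$ act as $C_f$, the vector $e_1$ is a cyclic generator with annihilator $(f)$, so $R^n\cong A:=R[t]/(f)$ as $R[t]$-modules with $t$ acting by multiplication. Consequently the centralizer of $C_f$ in $\mathbb{M}_n(R)=\mathrm{End}_R(A)$ is $\mathrm{End}_{R[t]}(A)=A$, the multiplication operators. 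Thus a commuting idempotent is exactly an idempotent $\varepsilon\in A$, its companion unit is $t-\varepsilon$, and $C_f$ is strongly clean if and only if there is an idempotent $\varepsilon\in A$ with $t-\varepsilon\in U(A)$.

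Given such an $\varepsilon$, I would build the factorization by decomposing $A=A(1-\varepsilon)\oplus A\varepsilon$ as $R[t]$-modules. Here I use that $R$ is projective-free (the standing hypothesis of this section, on which Corollary~\ref{cor:2.4} also rests), so these summands of the free module $A$ are themselves free, of ranks $n-d$ and $d$ say; set $f_0=\chi\big(t|_{A(1-\varepsilon)}\big)$ and $f_1=\chi\big(t|_{A\varepsilon}\big)$, monic of degrees $n-d$ and $d$. Since this is a direct sum of $R[t]$-modules and $\chi(t|_A)=f$, we get $f=f_0f_1$. Now $t-\varepsilon\in U(A)$ means $t-\varepsilon$ is invertible on each summand; it acts as $t|_{A(1-\varepsilon)}$ on the first and as $(t-1)|_{A\varepsilon}$ on the second. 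Taking determinants yields $f_0(0)=\pm\det\big(t|_{A(1-\varepsilon)}\big)\in U(R)$ and $f_1(1)=\pm\det\big((t-1)|_{A\varepsilon}\big)\in U(R)$. With the (only available) idempotents $e_0=0\neq 1=e_1$, this exhibits $f=f_0f_1$ as an SR factorization.

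It then remains to prove that $(f_0,f_1)$ is unimodular, and this is where ULP enters. By Definition~\ref{defn:19} it suffices to verify unimodularity of $(\overline{f_0},\overline{f_1})$ in $(R/\mathfrak m)[t]$ for every $\mathfrak m\in\mathrm{Max}(R)$. Over a residue field $k$ the ring $\overline A=k[t]/(\overline f)\cong\prod_i k[t]/(p_i^{m_i})$ is a product of local Artinian rings, and the reduced idempotent $\overline\varepsilon$ is the indicator of some subset $S$ of the factors; since characteristic polynomials commute with base change, $\overline{f_1}=\prod_{i\in S}p_i^{m_i}$ and $\overline{f_0}=\prod_{i\notin S}p_i^{m_i}$, which are coprime. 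ULP lifts this to unimodularity of $(f_0,f_1)$ in $R[t]$, completing the SRC factorization.

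I expect the main obstacle to be conceptual rather than computational. The raw output of strong cleanness is merely an idempotent of $A=R[t]/(f)$, not a factorization of $f$, and two separate things must be arranged. First, converting $\varepsilon$ into a genuine pair of \emph{monic} polynomials requires the summands $A\varepsilon$ and $A(1-\varepsilon)$ to be free of well-defined rank, which is exactly what projective-freeness supplies; without it (for instance over a product of fields, which still has ULP) the per-component factorizations can carry incompatible degrees and fail to assemble into one global monic factorization, so this hypothesis is genuinely used. Second, comaximality is not a formal consequence of the SR conditions $f_0(0),f_1(1)\in U(R)$ alone; it needs the lifting built into ULP, applied to the clean local picture furnished by the idempotent $\overline\varepsilon$ over each residue field.
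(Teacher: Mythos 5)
Your proof is correct, and it reaches the same factorization as the paper's, but the route to the key block decomposition is genuinely different. The paper proves the forward direction by quoting Lemma~\ref{lem:2.3} (from \cite{YZ072}): over a projective-free ring a purely singular strongly clean matrix is similar to $\mathrm{diag}(T_0,T_1)$ with $T_0$ semi-purely nonsingular and $T_1$ semi-purely singular; it reads off the {\rm SR} factorization as in Corollary~\ref{cor:2.4}, and checks comaximality modulo each maximal ideal by a degree count using the fact that $\overline f$ is the \emph{minimal} polynomial of $C_{\overline f}$ (Lemma~\ref{lem:2.6}), then applies {\rm ULP}. You instead exploit the cyclic structure of the companion matrix directly: the centralizer of $C_f$ is $A=R[t]/(f)$, strong cleanness becomes an idempotent $\varepsilon\in A$ with $t-\varepsilon\in U(A)$, and the splitting $A=A(1-\varepsilon)\oplus A\varepsilon$ (with free summands, by projective-freeness) yields monic $f_0,f_1$ with $f_0(0),f_1(1)\in U(R)$; your comaximality check via the CRT decomposition of $k[t]/(\overline f)$ is equivalent in substance to the paper's minimal-polynomial argument. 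What your approach buys is self-containedness --- it avoids importing Lemma~\ref{lem:2.3} --- and it makes transparent exactly where projective-freeness enters. On that point you are right to insist on the hypothesis: the theorem as literally stated assumes only {\rm ULP}, but both proofs need $R$ projective-free (the paper through Lemma~\ref{lem:2.3}, you through freeness of $A\varepsilon$), and your parenthetical about products of fields is essentially a counterexample to the literal statement: $\mathbb{C}\times\mathbb{C}$ is semilocal hence has {\rm ULP}, every matrix over it is strongly clean, yet for $n\ge 4$ the polynomial $\bigl(t^{n-1}(t-1),\,t^{n-2}(t-1)^2\bigr)$ admits no $n$-{\rm SRC} factorization by the degree mismatch used in Proposition~\ref{thm:0.1}.
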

\begin{proof} ``$\Leftarrow$". By Corollary  \ref{cor:0.6}.

``$\Rightarrow$". The argument of Corollary \ref{cor:2.4} shows that
if $T$ is not purely singular, then $\chi_{T}(t)$ has a {\rm trivial
} {\rm SRC} {\rm {factorization}}, that is, one of the factors is
$1$ and the other is $\chi_{T}(t)$ itself. So we can assume $C_{f}$
is purely singular.
 Then by Lemma \ref{lem:2.3}, there exists $P\in {\mathbb
M}_{n}(R)$ such that $P^{-1}C_{f}P=\scriptsize\left(
\begin{array}{cc}
T_{0} & 0\\
 0&T_{1}
\end{array}
\right)\normalsize$ with $T_{0}$ being $k\times k$ semi-purely
nonsingular matrix and $T_{1}$ being $(n-k)\times (n-k)$ semi-purely
singular matrix where $0<k<n$. Then for every maximal ideal
${\mathfrak{m}}$ in $R$, $\overline{C_{f}}=C_{\overline{f}}\in
{\mathbb M}_{n}(\frac{R}{{\mathfrak m}})$ has $\overline{f}(t)\in
\frac{R}{{\mathfrak m}}[t]$ as the characteristic and minimal
polynomial by Lemma \ref{lem:2.6}. So $\overline{f}(t)=
\chi_{\overline{C_{f}}}$$(t)$ $=\chi_{\overline{T}_{0}}(t)\cdot
\chi_{\overline{T}_{1}}(t)=\det(tI_{k}-\overline{T}_{0})$ $\cdot$
$\det(tI_{n-k}-\overline{T}_{1})$. If
$\gcd(\det(tI_{k}-\overline{T}_{0}),
\det(tI_{n-k}-\overline{T}_{0}))= g(t)$ with degree
$\deg(g(t))\geq1$, then the minimal polynomial of $\overline{C_{f}}$
is $\frac{\det(tI_{k}-\overline{T}_{0})
\det(tI_{n-k}-\overline{T}_{1})}{g(t)}$ which has degree less than
$\deg(\chi_{\overline{C_{f}}})=deg(f)$. This is a contradiction. So
 $f_{0}(t)=\det(tI-T_{0})$, $f_{1}(t)=\det(tI-T_{1})$, $e_{i}=i$, and $f_{i}(e_{i})\in U(R)$ $(i=0,1)$ give an $n$-{\rm SRC}
factorization for $\chi_{C_{f}}(t)=f(t)$.
 \end{proof}
\begin{cor}\label{cor:2.8} Let $R$ be a commutative ring having {\rm
ULP} and let $f(t)\in R[t]$ be a monic polynomial of degree
$\deg(f(t))=n$. Then the following are equivalent:
\begin{enumerate}\item For all $A\in {\mathbb M}_{n}(R)$ with
$\chi_{A}(t)=f(t)$, $A$ is strongly clean.
\item The companion matrix $C_{f}$ is strongly clean.
\item $f(t)$ has an $n$-{\rm SRC} factorization.\end{enumerate}
\end{cor}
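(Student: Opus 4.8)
The plan is to establish the chain of implications $(3)\Rightarrow(1)\Rightarrow(2)\Rightarrow(3)$, so that all three statements become equivalent. The implication $(3)\Rightarrow(1)$ should be nearly immediate: if $f(t)$ has an $n$-\textrm{SRC} factorization and $A$ is any matrix with $\chi_{A}(t)=f(t)$, then $\chi_{A}(t)$ has an $n$-\textrm{SRC} factorization, and Corollary \ref{cor:0.6} says precisely that such an $A$ is strongly clean. Note this direction uses neither the \textrm{ULP} hypothesis nor anything about companion matrices; it is just the sufficiency already packaged in Corollary \ref{cor:0.6}, applied uniformly to every $A$ in the similarity-invariant class determined by $f$.

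The implication $(1)\Rightarrow(2)$ is trivial: the companion matrix $C_{f}$ is one particular matrix whose characteristic polynomial is $f(t)$. Indeed, over any commutative ring the companion matrix of $f$ has characteristic polynomial $f$ (this is the content we invoke from Lemma \ref{lem:2.6} after reduction mod $\mathfrak{m}$, but the computation of $\det(tI-C_{f})=f(t)$ holds already over $R$). So $C_{f}$ is a member of the family in $(1)$, and if every such matrix is strongly clean then in particular $C_{f}$ is. I would state this in one sentence.

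The remaining implication $(2)\Rightarrow(3)$ is the substantive one, and here I would simply cite Theorem \ref{thm:2.7}: that theorem asserts, for $R$ commutative with \textrm{ULP}, that $C_{f}$ is strongly clean iff $f(t)=\chi_{C_{f}}(t)$ has an $n$-\textrm{SRC} factorization. Thus $(2)$ is literally equivalent to $(3)$ by Theorem \ref{thm:2.7}, and in particular $(2)\Rightarrow(3)$ holds. This is where the \textrm{ULP} hypothesis is genuinely needed, because the proof of Theorem \ref{thm:2.7} uses it to lift the coprimality of the characteristic polynomials of the two blocks $T_{0},T_{1}$ (obtained modulo each maximal ideal via Lemma \ref{lem:2.6} and Lemma \ref{lem:2.3}) to an actual unimodularity statement over $R[t]$.

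The main obstacle is really no obstacle at all at the level of this corollary, since the hard analytic work has been absorbed into Theorem \ref{thm:2.7}: the reduction to a field modulo each maximal ideal, the passage through minimal polynomials to force the block characteristic polynomials to be coprime, and the \textrm{ULP}-based lift. The only point to watch is that the corollary claims the equivalence for \emph{all} $A$ with $\chi_{A}=f$, not merely for $C_{f}$; this is handled cleanly by routing the cycle through $(1)$, using Corollary \ref{cor:0.6} for the uniform sufficiency and Theorem \ref{thm:2.7} only for the single matrix $C_{f}$. In short, I would write: $(3)\Rightarrow(1)$ by Corollary \ref{cor:0.6}, $(1)\Rightarrow(2)$ trivially since $\chi_{C_{f}}(t)=f(t)$, and $(2)\Leftrightarrow(3)$ by Theorem \ref{thm:2.7}.
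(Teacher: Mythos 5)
Your proposal is correct and matches the paper's proof exactly: the same cycle $(1)\Rightarrow(2)$ (trivial, since $\chi_{C_f}(t)=f(t)$), $(2)\Rightarrow(3)$ by Theorem~\ref{thm:2.7}, and $(3)\Rightarrow(1)$ by Corollary~\ref{cor:0.6}. Your added commentary on where the {\rm ULP} hypothesis enters is accurate but not needed beyond the citations.
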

\begin{proof} ``$({\it 1}) \Rightarrow ({\it 2})$" is clear.
``$({\it 2}) \Rightarrow ({\it 3})$" is Theorem \ref{thm:2.7}.
``$({\it 3}) \Rightarrow ({\it 1})$" is Corollary \ref{cor:0.6}.
\end{proof}
\begin{que}Does every commutative projective-free ring have {\rm
ULP}?
\end{que}
\begin{thm}\label{thm:2.10} Let $R$ be a commutative projective-free ring. Then a
purely singular matrix $A\in {\mathbb M}_{n}(R)$ is strongly clean
iff $\chi_{A}(t)$ has an $n$-{\rm SR} factorization
$\chi_{A}(t)=f_{0}(t)f_{1}(t)$ with $e_{i}=i$ $(i=0,1)$ and $A$ is
similar to $\scriptsize \left(
\begin{array}{cc}
T_{0} & 0\\
 0&T_{1}
\end{array}
\right)\normalsize$ where $\chi_{T_{0}}(t)=f_{0}(t)$ and
$\chi_{T_{1}}(t)=f_{1}(t)$.
\end{thm}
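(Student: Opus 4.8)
The plan is to reduce everything to Lemma \ref{lem:2.3}, which says that over a projective-free ring a purely singular matrix is strongly clean exactly when it is similar to a block-diagonal matrix $\left(\begin{smallmatrix} T_0 & 0 \\ 0 & T_1\end{smallmatrix}\right)$ with $T_0$ semi-purely nonsingular and $T_1$ semi-purely singular. The whole content of the theorem is the dictionary between the defining data of an $n$-{\rm SR} factorization with $e_i=i$ — namely that $f_0(0)$ and $f_1(1)$ are units — and the singularity/nonsingularity of the diagonal blocks, with the hypothesis that $A$ is purely singular supplying the conditions that the factorization alone does not see. Note that, unlike Theorem \ref{thm:2.7}, no appeal to {\rm ULP} is needed; only projective-freeness, and that only through Lemma \ref{lem:2.3}.

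For the forward implication I would assume $A$ is strongly clean. Since $A$ is purely singular, Lemma \ref{lem:2.3} furnishes $P$ with $P^{-1}AP=\left(\begin{smallmatrix} T_0 & 0 \\ 0 & T_1\end{smallmatrix}\right)$, where $T_0$ (say of size $k$) is semi-purely nonsingular and $T_1$ (of size $n-k$) is semi-purely singular. Setting $f_i(t)=\chi_{T_i}(t)$ gives $\chi_A(t)=f_0(t)f_1(t)$ and exhibits the required similarity with $\chi_{T_i}=f_i$. Both $f_i$ are monic, and I would check the two unit conditions directly: $f_0(0)=\det(-T_0)=(-1)^k\det(T_0)\in U(R)$ because $T_0$ is nonsingular, while $f_1(1)=\det(I-T_1)\in U(R)$ because $T_1$ is semi-purely singular, so $I-T_1$ is nonsingular. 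Thus $e_0=0$, $e_1=1$ give an $n$-{\rm SR} factorization of the required shape.

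For the converse I would assume $\chi_A=f_0f_1$ is an $n$-{\rm SR} factorization with $e_i=i$ and that $A$ is similar to $\left(\begin{smallmatrix} T_0 & 0 \\ 0 & T_1\end{smallmatrix}\right)$ with $\chi_{T_i}=f_i$; by Lemma \ref{lem:2.3} it then suffices to verify that $T_0$ is semi-purely nonsingular and $T_1$ is semi-purely singular. The factorization data give half of this at once: $f_0(0)=(-1)^{\deg f_0}\det(T_0)\in U(R)$ forces $T_0$ nonsingular, and $f_1(1)=\det(I-T_1)\in U(R)$ forces $I-T_1$ nonsingular. The remaining two conditions I would extract from pure singularity of $A$ via multiplicativity of the determinant across the block decomposition: from $\det(A)=\det(T_0)\det(T_1)$ a non-unit together with $\det(T_0)\in U(R)$, I get $\det(T_1)$ a non-unit, i.e. $T_1$ singular; and from $\det(I-A)=\det(I-T_0)\det(I-T_1)$ a non-unit together with $\det(I-T_1)\in U(R)$, I get $\det(I-T_0)$ a non-unit, i.e. $I-T_0$ singular. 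Hence $T_0$ is semi-purely nonsingular and $T_1$ is semi-purely singular, and Lemma \ref{lem:2.3} yields that $A$ is strongly clean.

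The step I expect to be the crux is this converse bookkeeping: the $n$-{\rm SR} conditions pin down only one singularity datum per block ($T_0$ nonsingular, $I-T_1$ nonsingular), and it is precisely the standing assumption that $A$ is purely singular — i.e. that both $\det(A)$ and $\det(I-A)$ are non-units — that, combined with $\det(T_0),\det(I-T_1)\in U(R)$, forces $T_1$ singular and $I-T_0$ singular and thereby completes the semi-pure classification required to invoke Lemma \ref{lem:2.3}. Everything else is routine determinant algebra, since over a commutative ring a matrix is invertible iff its determinant is a unit.
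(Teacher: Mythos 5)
Your argument is correct. The forward direction is the same as the paper's: invoke Lemma \ref{lem:2.3} to get the block decomposition and then read off the unit conditions $f_0(0)=\det(-T_0)\in U(R)$ and $f_1(1)=\det(I-T_1)\in U(R)$, which is exactly the computation of Corollary \ref{cor:2.4}. Your converse, however, takes a genuinely different route. The paper notes that $f_0=f_0\cdot 1$ and $f_1=1\cdot f_1$ are trivial {\rm SRC} factorizations of $\chi_{T_0}$ and $\chi_{T_1}$, so each block is strongly clean by Corollary \ref{cor:0.6} (concretely, $T_0$ and $T_1-I$ are units, so $T_0=T_0+0$ and $T_1=(T_1-I)+I$ are strongly clean decompositions), and then $A$ is strongly clean as a matrix similar to a direct sum of strongly clean matrices; this argument never uses the hypothesis that $A$ is purely singular. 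You instead complete the semi-pure classification of the blocks, using pure singularity of $A$ and multiplicativity of the determinant to force $T_1$ and $I-T_0$ singular, and then re-enter the ``if'' direction of Lemma \ref{lem:2.3}. Both are sound; the paper's version is more elementary in that it bypasses the harder direction of Lemma \ref{lem:2.3} at the last step, while yours makes explicit exactly where the purely-singular hypothesis is consumed. Your observation that {\rm ULP} is not needed is also well taken: the paper's proof formally cites Corollary \ref{cor:2.8}, whose statement assumes {\rm ULP}, but the only implication used there, $(3)\Rightarrow(1)$, is Corollary \ref{cor:0.6} and requires no such hypothesis.
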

\begin{proof} ``$\Rightarrow$". By Lemma \ref{lem:2.3}, $A$ is similar
to $\scriptsize \left(
\begin{array}{cc}
T_{0} & 0\\
 0&T_{1}
\end{array}
\right)\normalsize$ where $T_{0}$ is semi-purely nonsingular and
$T_{1}$ is semi-purely singular. By Corollary \ref{cor:2.4},
$\chi_{A}(t)$ has an $n$-{\rm SR} factorization
$\chi_{A}(t)=f_{0}(t)f_{1}(t)$ where $\chi_{T_{0}}(t)=f_{0}(t)$,
$\chi_{T_{1}}(t)=f_{1}(t)$, $e_{i}=i$, $f_{i}(e_{i})\in U(R)$
$(i=0,1)$.

``$\Leftarrow$". By Corollary \ref{cor:2.8}, $T_{0}$ and $T_{1}$ are
strongly clean because $\chi_{T_{0}}(t)=f_{0}(t)$ and
$\chi_{T_{1}}(t)=f_{1}(t)$ have trivial {\rm SRC} factorizations. So
$A$ is strongly clean because the strongly clean property is
invariant under similarity. \end{proof}
\section{Strong cleanness  of ${\mathbb M}_n(C(X,{\mathbb C}))$ with $X$ a {\rm P}-space relative to ${\mathbb C}$}
A topological space $X$ is said to be \emph{completely regular} if
whenever $F$ is a closed set and $x$ is a point in its complement,
there exists a continuous function $f: X\rightarrow [0,1]$ such that
$f(x)=1$ and $f[F]=\{0\}$. Let $C(X)$ (resp., $C(X,{\mathbb C})$)
denote the ring of all real (resp., complex) valued continuous
functions from a completely regular Hausdorff space $X$ to the real
number field ${\mathbb R}$ (resp., complex number field ${\mathbb
C}$). For a function $f\in C(X)$ (or $C(X,{\mathbb C}) $), the set
$z(f)$$=\{x\in X: f(x)=0\}$ is called the \emph {zero-set} of $f$.
An open set $U\subseteq X$ is called \emph{functionally open} if the
complement $X\backslash U$ is a zero-set. A topological space $X$ is
called {\it strongly zero-dimensional} if $X$ is a completely
regular Hausdorff space and every finite functionally open cover
$\{U_{i}\}_{i=1}^{k}$ of the space $X$  has a finite open refinement
$\{V_{i}\}_{i=1}^{m}$ such that $V_{i} \cap V_{j}= \emptyset $ for
any $i\neq j$ \cite{E77}. A completely regular space $X$ is called a
{\rm P}-{\it space relative to} ${\mathbb C}$ if every prime ideal
in $C(X, \mathbb{C})$ is maximal. Matrix rings over $C(X)$ with $X$
a {\rm P}-space relative to ${\mathbb R}$ are strongly $\pi$-regular
\cite{FY071}. In this section, we prove the similar results for
$C(X, \mathbb{C})$ with $X$ a Hausdorff {\rm P}-space relative to
${\mathbb C}$. First, we give some notions. For an ideal $I\leq
C(X,{\mathbb C})$, $z[I]=\{z(f): f\in I\}$. An ideal $I\leq
C(X,{\mathbb C})$ is a {\it $z$-ideal} if $z(g)\in z[I]$ implies
$g\in I$. Let $S$ be a ring and $R$ be a subring of $S$ such that
they share the same identity. The ring $S$ is called a \emph{finite
extension} of $R$ if $S$, as an $R$-module, is generated by a finite
set $X$ of generators.
\begin{thm} \label{thm:3.1} Let $X$ be a Hausdorff {\rm P}-space relative to ${\mathbb C}$.
Then $R=C(X, \mathbb{C})$ is strongly regular. Hence, every finite
extension of $R$ is strongly $\pi$-regular. In particular,
$\mathbb{M}_{n}(R)$ is strongly $\pi$-regular.
\end{thm}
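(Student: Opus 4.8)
The plan is to prove the three assertions in turn: that $R=C(X,\mathbb{C})$ is strongly regular, that every finite extension of $R$ is strongly $\pi$-regular, and that $\mathbb{M}_{n}(R)$ is a special case of the second assertion. First I would observe that $R$ is reduced: if $f^{m}=0$ then $f(x)^{m}=0$ in the field $\mathbb{C}$ for every $x\in X$, whence $f(x)=0$ for all $x$ and $f=0$. The hypothesis that $X$ is a {\rm P}-space relative to $\mathbb{C}$ says precisely that every prime ideal of $R$ is maximal, i.e.\ that $R$ has Krull dimension $0$. A commutative reduced ring of Krull dimension $0$ is von Neumann regular, and a commutative von Neumann regular ring is strongly regular (for commutative $R$ one has $aRa=a^{2}R$, so $a\in aRa$ iff $a\in a^{2}R$). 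This gives the first claim, and I would record the element-wise consequence needed below: each $a$ in a strongly regular ring satisfies $a=a^{2}x$, so strongly regular rings are strongly $\pi$-regular.

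Next let $S$ be a finite extension of $R$, so that $S$ is generated as an $R$-module by finitely many elements, with $R$ sitting in the center of $S$ (as the scalars, e.g.\ the scalar matrices in the matrix case). Fix $s\in S$. Because $S$ is module-finite over $R$, the element $s$ is integral over $R$, and hence the commutative subring $R[s]$ is itself module-finite, and so integral, over $R$. Integral extensions preserve Krull dimension, so $R[s]$ is again zero-dimensional. I would then invoke the standard characterization that a commutative ring is zero-dimensional if and only if it is strongly $\pi$-regular, meaning that for each element $a$ there exist $n\ge 1$ and $b$ with $a^{n}=a^{n+1}b$. Applying this to $s$ inside the commutative ring $R[s]$ produces an integer $n\ge 1$ and an element $b\in R[s]$ with $s^{n}=s^{n+1}b$ and $sb=bs$.

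This relation persists in $S$, where it yields $s^{n}\in s^{n+1}S$ and $s^{n}=b\,s^{n+1}\in S\,s^{n+1}$; hence both chains $s^{n}S\supseteq s^{n+1}S\supseteq\cdots$ and $S s^{n}\supseteq S s^{n+1}\supseteq\cdots$ terminate, so $s$ is a strongly $\pi$-regular element of $S$ by Azumaya's criterion. As $s$ was arbitrary, $S$ is strongly $\pi$-regular. Finally, $\mathbb{M}_{n}(R)$ is a free $R$-module of rank $n^{2}$ containing $R$ as central scalar matrices, hence is a finite extension of $R$, and is therefore strongly $\pi$-regular by the preceding step.

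I expect the finite-extension step to be the crux: the substantive content is the reduction of a single element $s$ of the possibly noncommutative ring $S$ to the commutative integral subring $R[s]$ (which is where centrality of $R$ is used), together with the facts that zero-dimensionality transfers along integral extensions and is equivalent to the element-wise strongly $\pi$-regular identity $a^{n}=a^{n+1}b$. The first claim (strong regularity of $R$) and the last claim (the matrix specialization) are then routine.
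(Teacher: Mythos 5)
Your proof is correct, but it takes a genuinely different route from the paper's on both halves. For the strong regularity of $R=C(X,\mathbb{C})$, the paper argues topologically in the style of Gillman--Jerison: it introduces the ideals $O_p\subseteq M_p$, uses $z$-ultrafilters to show $O_p=M_p$ when every prime ideal is maximal, deduces that every zero-set is clopen, that every ideal is a $z$-ideal and hence a radical ideal, and finally that $(f)=(f^2)$ for every $f$. You instead observe that $R$ is reduced and zero-dimensional and invoke the standard fact that a reduced zero-dimensional commutative ring is von Neumann (hence strongly) regular; this is shorter, purely ring-theoretic, and never uses the Hausdorff or complete-regularity hypotheses, whereas the paper's route yields the extra structural by-products (clopen zero-sets, every ideal a $z$-ideal). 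For the finite-extension step the paper simply cites Hirano's Corollary 4; you supply a self-contained proof: $s$ is integral over the central subring $R$ by the determinant trick, the commutative subring $R[s]$ is module-finite and integral over $R$ and hence zero-dimensional, and zero-dimensionality of a commutative ring is equivalent to the identity $a^n=a^{n+1}b$, which (since $b\in R[s]$ commutes with $s$) forces both chains $s^nS\supseteq s^{n+1}S\supseteq\cdots$ and $Ss^n\supseteq Ss^{n+1}\supseteq\cdots$ to stabilize. The one caveat is that your argument uses the centrality of $R$ in $S$ (needed both for the determinant trick and for $R[s]$ to be commutative); the paper's definition of ``finite extension'' does not formally impose this, though it holds in the intended application to $\mathbb{M}_n(R)$, so the matrix conclusion is fully justified.
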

\begin{proof} Suppose $X$ is a {\rm P}-space relative to ${\mathbb C}$.  For $p \in X$, set $O_{p}=\{f \in R): z(f)$ is a
neighborhood of $p \}$ and $ M_{p}=\{f\in R): f(p)=0\}$.  Then
$M_{p}$ is a maximal ideal and  $O_{p}$ is a $z$-ideal in $R$ with
$O_{p}\subseteq M_{p}$.

 Let $A_{p}$ be the family of all zero-sets containing a given point $p$. Then $A_{p}$ is
 the unique $z$-ultrafilter converging to $p$ \cite[p.47]{GJ76}. For any ideal $I$ in $R, z[I]$ is a $z$-filter and if
$I$ is a maximal ideal, then $z[I]$ is a $z$-ultrafilter. Thus
$z[O_{p}]\subseteq z[M_{p}]=A_{p}$. So $M_{p}$ is the only maximal
ideal that contains $O_{p}$. Notice that $z(f^{n})=z(f)$ for any $n
\in \mathbb{N}$. If  $I$ is a $z$-ideal and $f^{n} \in I$ then
$z(f)=z(f^{n}) \in z[I] $ implies $f \in I$. So $I$ is a radical
ideal, that is, $I$ is an intersection of prime ideals containing
$I$. Hence,  $O_{p}$ is an intersection of prime ideals. Since
$M_{p}$ is the only maximal ideal that contains $O_{p}$, $O_{p}
\not=M_{p}$ implies $O_{p}$ is contained in a prime ideal that is
not maximal. However, every prime ideal is maximal if $X$ is a ${\rm
P}$-space relative to ${\mathbb C}$. Hence, $O_{p} =M_{p}$.

 Let $p$ be any point in $z(f)$. Then  $f(p)=0$ implies $f \in
 M_{p}=O_{p}$. Hence, $z(f)$ is open, that is, every zero-set is clopen.
Suppose $I$ is an ideal of $R$ and $z(f) \in z[I]$, then $z(f)=z(g)$
for some $g \in I$. Define $h: X \rightarrow \mathbb{C}$ by $h(x)=0$
if $x \in z(f)$ and $h(x)=\frac{f(x)}{g(x)}$ if $x \not \in z(f)$.
Then $h \in R$ and $f=gh$. Thus, $f \in I$, so $I$ is a $z$-ideal.
Hence, every ideal in $R$ is a $z$-ideal. So every ideal is a
radical ideal.

 Since $f$ and $ f^{2}$ belong to the same prime ideals,
$(f)=\bigcap_{f \in {\mathfrak p}:\text{prime}} {\mathfrak p}
=\bigcap_{f^2 \in {\mathfrak p}:\text{prime}} {\mathfrak
p}=(f^{2})$. So $f=f^{2}f_{0}$ for some $f_{0} \in R$.  So $R$ is
strongly regular. Hence, by \cite[Corollary 4]{H90}, every finite
extension of $R$ is strongly $\pi$-regular. In particular,
$\mathbb{M}_{n}(R)$ is strongly $\pi$-regular since
$\mathbb{M}_{n}(R)$ is the finite extension of $R$.
\end{proof}

\begin{cor}Let $X$ be a {\rm P}-space relative to ${\mathbb C}$ and $G$ be a locally finite group and let $R=C(X, \mathbb{C})$. Then ${\mathbb M}_n((RG)[[X]])$ and
${\mathbb M}_n\left(\frac{(RG)[x]}{(x^k)}\right)$ are strongly
clean. In particular, $\mathbb{M}_{n}(R)$ is strongly
clean.\end{cor}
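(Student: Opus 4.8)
The plan is to derive all three assertions from strong $\pi$-regularity, using the fact recalled in the Introduction that strongly $\pi$-regular rings are strongly clean. By \thmref{thm:3.1}, $R=C(X,\mathbb{C})$ is strongly regular, so by \cite[Corollary 4]{H90} every finite extension of $R$ is strongly $\pi$-regular. In particular, for each finite subgroup $H\leq G$, both $RH$ and $\mathbb{M}_n(RH)$ are finite extensions of the (central) strongly regular ring $R$---free $R$-modules of ranks $|H|$ and $n^{2}|H|$---and hence are strongly $\pi$-regular.

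The crucial step is the ascent from finite subgroups to $G$ itself. Here I would use Azumaya's equational characterization \cite{Az54}: an element $a$ of a ring $S$ is strongly $\pi$-regular iff $a^{m}=a^{m+1}b$ for some $m\geq 1$ and some $b\in S$ commuting with $a$. This condition is manifestly inherited by any overring $S\subseteq S'$, since the same witnesses $m$ and $b$ continue to work. Because $G$ is locally finite, every element of $RG$ has finite support and therefore lies in $RH$ for some finite subgroup $H$; being strongly $\pi$-regular in $RH$, it is strongly $\pi$-regular in $RG$. Thus $RG$ is strongly $\pi$-regular, and the identical argument applied to $\mathbb{M}_n(RG)=\bigcup_H \mathbb{M}_n(RH)$ shows that $\mathbb{M}_n(RG)$ is strongly $\pi$-regular, hence strongly clean.

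With $S=RG$ now known to be strongly $\pi$-regular, I would invoke the theorems of Yang and Zhou \cite{YZ071} to the effect that, for a strongly $\pi$-regular ring $S$, the matrix rings $\mathbb{M}_n(S[[x]])$ and $\mathbb{M}_n(S[x]/(x^{k}))$ are strongly clean. Applying these with $S=RG$ yields that $\mathbb{M}_n((RG)[[x]])$ and $\mathbb{M}_n((RG)[x]/(x^{k}))$ are strongly clean, which are the first two assertions. The final assertion, that $\mathbb{M}_n(R)$ is strongly clean, is either the special case $G=\{1\}$, $k=1$, or is already contained in \thmref{thm:3.1}.

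I expect the power series construction to be the main obstacle. For the truncated polynomial ring the ideal $(x)$ is nilpotent with quotient $\mathbb{M}_n(RG)$, so the situation stays close to the strongly $\pi$-regular setting and is comparatively benign. Formal power series, by contrast, destroy strong $\pi$-regularity---already $\mathbb{C}[[x]]$ over the field $\mathbb{C}$ is not strongly $\pi$-regular---so one cannot argue purely within the strongly $\pi$-regular world and must lean on the completeness-type input of \cite{YZ071} (compare \cite{BDD051}, where matrix rings over Henselian rings are shown to be strongly clean). The other point demanding care is the ascent from finite subgroups to $G$, which works precisely because strong $\pi$-regularity, via the equational form above, is a finitely supported, elementwise condition that rises along the directed union $RG=\bigcup_H RH$.
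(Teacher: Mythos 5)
Your proof is correct and takes essentially the same route as the paper, whose entire argument is ``By Theorem~\ref{thm:3.1} and \cite[Corollary 3.2]{YZ071}'': everything rests on the strong regularity of $C(X,\mathbb{C})$ established in Theorem~\ref{thm:3.1} plus the Yang--Zhou results on group rings, power series, and truncated polynomial rings. The only difference is that you unpack part of that citation by carrying out the ascent of strong $\pi$-regularity from $R$ to $RG$ (and to $\mathbb{M}_n(RG)$) by hand via the directed union over finite subgroups, which is exactly the mechanism underlying the cited corollary, and you correctly flag that the power series case cannot be handled inside the strongly $\pi$-regular world and must be delegated to \cite{YZ071}.
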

\begin{proof}By Theorem \ref{thm:3.1} and  \cite[Corollary 3.2]{YZ071}.
\end{proof}
\begin{cor} If $X$ is a discrete space, then
$\mathbb{M}_{n}( C(X, \mathbb{C}))$ is strongly $\pi$-regular
(hence, strongly clean).
\end{cor}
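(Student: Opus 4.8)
The plan is to reduce everything to \thmref{thm:3.1} by checking that a discrete space $X$ is a Hausdorff {\rm P}-space relative to $\mathbb{C}$. Since every subset of a discrete space is open, distinct points $p\neq q$ are separated by the open sets $\{p\}$ and $\{q\}$, so $X$ is Hausdorff; and for any closed set $F$ and any point $x\notin F$, the indicator function of $\{x\}$ maps into $[0,1]$, is continuous (every function on a discrete space is continuous), takes the value $1$ at $x$, and vanishes on $F$, which witnesses complete regularity. Thus $X$ is completely regular and Hausdorff, matching the standing hypothesis of \thmref{thm:3.1}.

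The crux is to verify the {\rm P}-space condition, namely that every prime ideal of $R=C(X,\mathbb{C})$ is maximal. Because $X$ is discrete, every complex-valued function on $X$ is continuous, so $R=C(X,\mathbb{C})=\mathbb{C}^{X}$ is the full direct product of copies of $\mathbb{C}$. I would show directly that this ring is strongly (von Neumann) regular: given $f\in R$, define $g\in R$ pointwise by $g(x)=f(x)^{-1}$ when $f(x)\neq 0$ and $g(x)=0$ otherwise, so that $f=f^{2}g$. A commutative strongly regular ring is reduced of Krull dimension zero, hence each of its prime ideals is maximal; this is exactly the requirement that $X$ be a {\rm P}-space relative to $\mathbb{C}$.

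With $X$ now known to be a Hausdorff {\rm P}-space relative to $\mathbb{C}$, \thmref{thm:3.1} applies and yields that $R=C(X,\mathbb{C})$ is strongly regular and that every finite extension of $R$---in particular $\mathbb{M}_{n}(R)$---is strongly $\pi$-regular. Since strongly $\pi$-regular elements are strongly clean, $\mathbb{M}_{n}(C(X,\mathbb{C}))$ is then strongly clean as well, which is the assertion.

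I expect the only genuine content to lie in the {\rm P}-space verification; the separation axioms are immediate for discrete spaces and the passage to $\mathbb{M}_{n}(R)$ is handled verbatim by \thmref{thm:3.1}. A cleaner alternative that bypasses the {\rm P}-space language entirely is to observe that $R=\mathbb{C}^{X}$ is strongly regular by the pointwise-inverse computation above and then invoke the finite-extension conclusion of \thmref{thm:3.1} directly. In either route the main (and rather mild) obstacle is simply recognizing the product structure $C(X,\mathbb{C})=\mathbb{C}^{X}$ and exhibiting the quasi-inverse $g$.
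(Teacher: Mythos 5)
Your proposal is correct and follows the same route as the paper, which simply observes that every discrete space is a Hausdorff P-space relative to $\mathbb{C}$ and then cites Theorem \ref{thm:3.1}; you merely supply the details (complete regularity, the identification $C(X,\mathbb{C})=\mathbb{C}^{X}$, and the pointwise quasi-inverse showing every prime ideal is maximal) that the paper leaves unstated.
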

\begin{proof} Every discrete space is a {\rm P}-space relative to ${\mathbb C}$.  \end{proof}
\section*{Acknowledgments} The first author and the second were partially supported by NSERC of
Canada and the Initial Grant of Harbin Institute of Technology
respectively.

\end{document}